\newcommand\NN{\mathbb{N}}
\newcommand\QQ{\mathbb{Q}}
\newcommand\Acal{\mathcal{A}}
\newcommand\Hcal{\mathcal{H}}
\newcommand\Lcal{\mathcal{L}}
\newcommand\Wcal{\mathcal{W}}
\newcommand\ii{\mathfrak{I}}
\newcommand\Sym{\mathfrak{S}}
\newcommand\Htil{\widetilde{H}}
\newcommand\Ktil{\widetilde{K}}
\theoremstyle{definition}
\newtheorem{definition}{Definition}[section]
\newtheorem{example}{Example}[section]
\newtheorem{theorem}{Theorem}[section]
\newtheorem{lem}{Lemma}[section]
\newtheorem{cor}{Corollary}[section]
\newtheorem{conj}{Conjecture}[section]
\newtheorem{remark}{Remark}[section]
\definecolor{darkred}{rgb}{0.7,0,0} 
\definecolor{darkgreen}{rgb}{0, .6, 0} 
\title{A proof of the $\frac{n!}{2}$ conjecture for hook shapes}
\author{Sam Armon}
\date{}
\begin{document}

\maketitle

\begin{abstract}
A well-known representation-theoretic model for the transformed Macdonald polynomial $\Htil_\mu(Z;t,q)$, where $\mu$ is an integer partition, is given by the Garsia-Haiman module $\Hcal_\mu$. We study the $\frac{n!}{k}$ conjecture of Bergeron and Garsia, which concerns the behavior of certain $k$-tuples of Garsia-Haiman modules under intersection. In the special case that $\mu$ has hook shape, we use a basis for $\Hcal_\mu$ due to Adin, Remmel, and Roichman to resolve the $\frac{n!}{2}$ conjecture by constructing an explicit basis for the intersection of two Garsia-Haiman modules.
\end{abstract}

\section{Introduction}

The Macdonald polynomials $\Htil_\mu(Z;t,q)$ have become a central object of study in algebra, combinatorics, and geometry since their introduction by Macdonald in \cite{Mac88}. Of particular import is the expansion of $\Htil_\mu(Z;t,q)$ in the Schur basis:
\[
\Htil_\mu(Z;t,q) = \sum_\lambda \Ktil_{\lambda \mu}(t,q) s_\lambda(Z).
\]
The (since resolved) \emph{Macdonald positivity conjecture} asserts that $\Ktil_{\lambda \mu}(t,q)$ is a polynomial in $t$ and $q$ with nonnegative integer coefficients. This conjecture suggests that $\Htil_\mu(Z;t,q)$ should have a representation-theoretic interpretation as the character of a doubly graded symmetric group module, and indeed such an interpretation is given by the \emph{Garsia-Haiman module} $\Hcal_\mu$:
\begin{theorem}[\cite{Hai01}]
The transformed Macdonald polynomial $\Htil_\mu(Z;t,q)$ equals the bigraded Frobenius series of the Garsia-Haiman module $\Hcal_\mu$.
\end{theorem}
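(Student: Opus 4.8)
The plan is to prove the identity geometrically, realizing $\Hcal_\mu$ as a fiber of a vector bundle on the Hilbert scheme of points in the plane and computing its equivariant character by localization. Write $H_n = \mathrm{Hilb}^n(\CC^2)$ for the Hilbert scheme of length-$n$ subschemes of $\CC^2$; by Fogarty's theorem it is smooth and irreducible of dimension $2n$, and the torus $T = (\CC^*)^2$ acts on it with isolated fixed points $I_\mu$ indexed by partitions $\mu \vdash n$, where $I_\mu$ is the monomial ideal spanned by the monomials not lying under the Young diagram of $\mu$. The first step is to reduce the theorem to the construction of an $S_n \times T$-equivariant vector bundle $P$ on $H_n$ of rank $n!$---the \emph{Procesi bundle}---whose fiber over a point representing $n$ distinct reduced points of $\CC^2$ is the regular representation of $S_n$. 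Granting such a $P$, I would compute the bigraded Frobenius series of the fiber $P_{I_\mu}$ by an Atiyah--Bott/\v{C}ech localization argument, using the known $T$-weights on the tangent space $T_{I_\mu}H_n$ and the local form of the universal subscheme near $I_\mu$; this recovers precisely the combinatorial formula for $\Htil_\mu(Z;t,q)$, and the same local analysis identifies $P_{I_\mu}$ with the span of the iterated partial derivatives of the Garsia--Haiman determinant $\Delta_\mu$, i.e.\ with $\Hcal_\mu$ itself. (In particular this would settle the $n!$ conjecture $\dim_\CC \Hcal_\mu = n!$ along the way.)

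Building $P$ is the substantive part. I would introduce the \emph{isospectral Hilbert scheme} $X_n$, the reduced fiber product of $H_n$ with $(\CC^2)^n$ over the symmetric product $\mathrm{Sym}^n\CC^2$, whose points are tuples $(I,P_1,\dots,P_n)$ with the $P_i$ listing the points of $\mathrm{Spec}(\CC[x,y]/I)$ with multiplicity, and take $P = \rho_*\mathcal{O}_{X_n}$ for $\rho\colon X_n \to H_n$ the projection. Since $\rho$ is finite of generic degree $n!$ and $H_n$ is smooth, $P$ will be locally free of rank $n!$, with the stated generic fiber and an evident $S_n \times T$-action, as soon as $X_n$ is Cohen--Macaulay. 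Thus the entire theorem hinges on proving that $X_n$ is Cohen--Macaulay---in fact normal and Gorenstein.

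The hard part is exactly this Cohen--Macaulay property. My plan is to reduce it, via a study of the local structure of $X_n$ along the diagonal loci together with blow-up and Rees algebra arguments, to a purely algebraic freeness statement about \emph{polygraphs}: for $l \ge 0$, let $Z(n,l) \subseteq (\CC^2)^n \times (\CC^2)^l$ be the union, over all maps $f\colon\{1,\dots,l\}\to\{1,\dots,n\}$, of the linear subspaces on which the $j$-th auxiliary point coincides with the $f(j)$-th of the first $n$ points. The goal is to prove, by a double induction on $n$ and $l$, that the coordinate ring $R(n,l)$ of $Z(n,l)$ is a \emph{free} module over $\CC[x_1,\dots,x_n,y_1,\dots,y_n]$. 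I would carry this out by setting up short exact sequences relating $R(n,l)$ to polygraph rings with smaller parameters, producing an explicit spanning set (monomials in the auxiliary coordinates together with certain determinants attached to subsets of the first $n$ points), and proving it is a basis by controlling degrees and the number of coinciding coordinates. This polygraph-freeness induction, with the $S_n \times T$-equivariance tracked throughout, is where essentially all of the difficulty lies; by contrast the passage from ``$X_n$ Cohen--Macaulay'' back to the bigraded Frobenius identity---smoothness of $H_n$, the localization formula at the fixed points $I_\mu$, and the identification $P_{I_\mu}\cong\Hcal_\mu$, which also draws on the established combinatorial theory of the $\Htil_\mu$---is comparatively formal. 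I would therefore expect the bulk of the work, and the main obstacle, to be the proof that $R(n,l)$ is free over $\CC[\mathbf{x},\mathbf{y}]$.
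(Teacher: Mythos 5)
This statement is quoted in the paper as a citation to \cite{Hai01}; the paper does not prove it, and the surrounding text only records the reduction (via the earlier theorem from \cite{Hai99}) to $\dim\Hcal_\mu = n!$. Your sketch is a faithful summary of Haiman's actual argument in the cited reference---Hilbert scheme, isospectral Hilbert scheme, Procesi bundle via $\rho_*\mathcal{O}_{X_n}$, Cohen--Macaulay/Gorenstein property of $X_n$, and reduction to freeness of the polygraph rings $R(n,l)$---so it matches the approach the paper is invoking, with the one small anachronism that Haiman (2001) does not have the HHL combinatorial formula available and instead checks the localization output against the axiomatic characterization of $\Htil_\mu$.
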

In fact, Garsia and Haiman were able to show that in order to establish this connection, it would be enough to prove that $\dim(\Hcal_\mu) = n!$. Thus the problem was reduced to this \emph{$n!$ conjecture}, which was later resolved by Haiman (\cite{Hai01}).

This paper studies the related \emph{$\frac{n!}{k}$ conjecture} of Bergeron and Garsia, which makes a similar assertion about the dimension of the intersection of certain $k$-tuples of Garsia-Haiman modules:
\begin{conj}[\cite{BG}]
Let $\lambda$ be an integer partition of $n+1$, and let $\mu^{(1)}, \ldots, \mu^{(k)}$ be partitions of $n$ each obtained from $\lambda$ by removing a removable cell from the Young diagram of $\lambda$. Then,
\[
\dim\left( \bigcap_{i=1}^k \Hcal_{\mu^{(i)}} \right) = \frac{n!}{k}.
\]
\end{conj}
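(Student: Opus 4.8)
The plan is to realize all of the modules $\Hcal_{\mu^{(i)}}$ inside the common polynomial ring $R_n=\QQ[x_1,\dots,x_n,y_1,\dots,y_n]$ and to exhibit $\bigcap_{i=1}^k\Hcal_{\mu^{(i)}}$ as a single $\QQ[\partial_{x},\partial_{y}]$-submodule whose generator(s) and Hilbert series can be read off from the Young diagram of $\lambda$. Recall that $\Hcal_\mu$ is the linear span of all partial derivatives of the Garsia--Haiman determinant $\Delta_\mu$, so each $\Hcal_{\mu^{(i)}}$ is a cyclic $\QQ[\partial_{x},\partial_{y}]$-module generated by $\Delta_{\mu^{(i)}}$, and the intersection, being closed under differentiation, is automatically such a submodule. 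The first step is to record the classical ``corner relations'': if $c_i$ is the removable cell of $\lambda$ deleted to form $\mu^{(i)}$, then the defining determinant $\Delta_\lambda$ degenerates, under a suitable higher partial derivative in the $(n+1)$st pair of variables followed by the substitution $x_{n+1}=y_{n+1}=0$, to a nonzero scalar multiple of $\Delta_{\mu^{(i)}}$ plus lower-order terms indexed by the remaining corners. These relations identify a common parent for the $\Hcal_{\mu^{(i)}}$, namely a specialization of $\Hcal_\lambda$, which has dimension $(n+1)!$ by Haiman's $n!$ theorem~\cite{Hai01} applied to the partition $\lambda$.

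Next I would set up the dimension bookkeeping. The natural guess, which I would aim to prove, is that $\bigcap_{i=1}^k\Hcal_{\mu^{(i)}}$ is the image in $R_n$ of the $\QQ[\partial_{x},\partial_{y}]$-submodule of $\Hcal_\lambda$ generated by the ``deepest'' derivative $D\,\Delta_\lambda$, where $D$ is built from the lowering operators attached to all $k$ corners simultaneously; dually, that the Hilbert series of $\sum_{i=1}^k\Hcal_{\mu^{(i)}}$ and of $\bigcap_{i=1}^k\Hcal_{\mu^{(i)}}$ fit into an inclusion--exclusion identity over the $k$ corners whose leading term is governed by $\dim\Hcal_\lambda=(n+1)!$ and whose correction terms telescope. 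Granting the $n!$ theorem for each $\mu^{(i)}$ and the expected formula for $\dim\sum_{i=1}^k\Hcal_{\mu^{(i)}}$, a counting argument then forces $\dim\bigcap_{i=1}^k\Hcal_{\mu^{(i)}}=\frac{1}{k}\bigl((n+1)!-\dim\sum_{i=1}^k\Hcal_{\mu^{(i)}}\bigr)$ to collapse to $\frac{n!}{k}$; the case $k=1$ is consistent, since then the intersection is $\Hcal_{\mu^{(1)}}$ itself and the statement is exactly the $n!$ theorem. To turn this into a proof rather than a heuristic one needs, crucially, an explicit spanning set for each $\Hcal_{\mu^{(i)}}$, so that the intersection inside $R_n$ can be computed basis element by basis element and its size verified.

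The main obstacle is precisely that last ingredient: there is no known explicit combinatorial (e.g. monomial) basis for $\Hcal_\mu$ for general $\mu$, since Haiman's proof of the $n!$ theorem is geometric and produces no such basis. Without one, the intersection of several $\Hcal_{\mu^{(i)}}$ cannot be pinned down cell-by-cell, and the inclusion--exclusion/corner-relation scheme above cannot be closed. This is exactly where restricting to hook shapes rescues the argument: the Adin--Remmel--Roichman basis gives a fully explicit, combinatorially indexed description of $\Hcal_\mu$, the corner relations become transparent, and one can write down generators for $\bigcap_{i=1}^k\Hcal_{\mu^{(i)}}$ directly --- with $k\le 2$ in the hook case, since a hook has at most two removable cells --- and count them to obtain $\frac{n!}{k}=\frac{n!}{2}$. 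The realistic route to the full conjecture is therefore two-step: first develop an explicit basis theory for $\Hcal_\mu$ in greater generality, then run the inclusion--exclusion/corner-relation argument; the hook case carried out here is the first instance in which both halves are simultaneously available.
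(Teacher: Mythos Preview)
The statement you are attempting is a \emph{conjecture}; the paper does not prove it in general and only establishes the hook case $k=2$ (Theorem~\ref{nfact/2}). Your proposal is not a proof but a heuristic outline, and you yourself identify the decisive gap: the inclusion--exclusion/corner-relation scheme ``cannot be closed'' without explicit bases, which are unavailable for general shapes. So as a proof of the full conjecture, the proposal fails by its own admission.

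Beyond that, there are two concrete problems worth flagging. First, you work throughout with the derivative model $D_\mu=\operatorname{span}\{f(\partial_X,\partial_Y)\Delta_\mu\}$ and call it $\Hcal_\mu$. In this paper $\Hcal_\mu$ is the quotient $\QQ[X,Y]/\ii_\mu$, and while $\Hcal_\mu\cong D_\mu$ as bigraded $\Sym_n$-modules, the isomorphism is degree-complementing and does \emph{not} carry $\Hcal_\mu\cap\Hcal_\rho$ to $D_\mu\cap D_\rho$; see Remark~\ref{rem:other-one}, which observes that the hook basis produced here does not translate to a basis of $D_\mu\cap D_\rho$ and that the two intersections appear genuinely different. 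Any argument that conflates the two models is suspect. Second, your key dimension identity $\dim\bigcap_i\Hcal_{\mu^{(i)}}=\tfrac{1}{k}\bigl((n{+}1)!-\dim\sum_i\Hcal_{\mu^{(i)}}\bigr)$ is asserted, not derived: ordinary inclusion--exclusion for $k$ subspaces involves all intermediate intersections, not just the full one, and you give no mechanism (symmetry, module structure, or otherwise) that would collapse those terms. Even for $k=2$ this would require an independent computation of $\dim(\Hcal_\mu+\Hcal_\rho)$, which you do not supply.

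Finally, even restricted to hooks, your sketch does not match what the paper actually does. The paper never invokes $\Hcal_\lambda$, corner relations, or inclusion--exclusion. Instead it (i) defines a bijection $\theta:SF_<(\mu)\to SF_<(\rho)$ built from two Foata-like shuffles $\mathsf{arm}_u$ and $\mathsf{leg}_v$, proves $\varphi_S=\varphi_{\theta(S)}$, and thereby exhibits $\tfrac{n!}{2}$ common monomials, giving the lower bound; and (ii) for the upper bound, shows via a direct divisibility argument (Lemma~\ref{lem-1}) that for $S\in SF_>(\mu)$ the monomial $\varphi_S$ is divisible by a $Y$-monomial that cannot occur in any $\varphi_T$ with $T\in SF(\rho)$, forcing any element of the intersection to lie in the span of $\{\varphi_S:S\in SF_<(\mu)\}$. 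If you want to recover the paper's result, that explicit monomial-level analysis is what needs to be carried out.
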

We resolve the $\frac{n!}{k}$ conjecture in the special case that $\lambda$ has hook shape --- the only nontrivial case is $k=2$ since a hook has at most two removable cells, hence the title. The rest of the paper is organized as follows: after outlining some necessary prerequisites in Section \ref{sec:background}, we introduce the Garsia-Haiman module $\Hcal_\mu$ in Section \ref{sec:GH}. In particular, we define a basis for $\Hcal_\mu$ when $\mu$ has hook shape; each basis element is indexed by a \emph{standard filling} of $\mu$, and is given by a monomial which encodes particular inversions in the standard filling. This basis is simply a restatement of the $k$-th Artin basis of \cite{ARR}, using the language of standard fillings rather than permutations. 

Equipped with this basis, we proceed in Section \ref{sec:n-fact} to construct an explicit basis for the intersection of two Garsia-Haiman modules with hook shape by defining a bijective map between particular subsets of standard fillings which leaves the corresponding basis element unchanged. Our bijection makes use of two ``Foata-like" maps which preserve particular inversions in the first row and column of a standard filling of hook shape.

\section{Macdonald polynomials}\label{sec:background}

We begin by cataloguing some necessary terminology. An \emph{integer partition} of $n \in \NN$ is a nonincreasing sequence of positive integers $\mu = (\mu_1 \ge \mu_2 \ge \cdots \ge \mu_k)$ such that $\sum_i \mu_i = n$, abbreviated $\mu \vdash n$. We say that an integer partition $\mu$ has \emph{hook shape} if $\mu = (a, 1, \ldots, 1)$ for some $a \ge 1$. The \emph{Young diagram} of $\mu$ a subset of $\NN \times \NN$ with $\mu_i$ left-justified cells in row $i$. We opt for French (coordinate) notation, so that the partition $\mu = (4,3,3)$ has Young diagram
\[
\ytableausetup{aligntableaux=bottom, boxsize=1.2em}
\begin{ytableau}
~ & ~ & ~ \\
~ & ~ & ~ \\
~ & ~ & ~ & ~
\end{ytableau}.
\]
By a slight abuse of notation we identify $\mu$ with its Young diagram; e.g. when we refer to ``the cells of $\mu$" we really mean ``the cells of the Young diagram of $\mu$".

The \emph{length} of a partition $\mu$, denoted $\ell(\mu)$, is the number of nonzero rows of $\mu$, and the \emph{conjugate} of $\mu$, denoted $\mu'$, is the partition obtained from $\mu$ by transposing rows and columns. We say a cell $c$ in $\mu$ is \emph{removable} if there are no cells immediately above or directly right of $c$; in other words, $c$ is removable if the diagram $\mu - \{ c \}$ still has partition shape. For instance, the cells at the ends of rows $1$ and $3$ in the above diagram are removable, but the cell at the end of row $2$ is not.

A \emph{filling} of $\mu \vdash n$ is a map $S : \mu \to \NN$ --- i.e. an assignment of positive integers to the cells of $\mu$ --- and a \emph{standard filling} of $\mu$ is a bijective map $S : \mu \to \{ 1, 2, \ldots, n \}$. For instance, the six standard fillings of $\mu = (2,1)$ are:
\[
\ytableausetup{aligntableaux=bottom, boxsize=1.2em}
\begin{array}{cccccc}
\begin{ytableau}
3 \\
1 & 2
\end{ytableau},
&
\begin{ytableau}
2 \\
1 & 3
\end{ytableau},
&
\begin{ytableau}
3 \\
2 & 1
\end{ytableau},
&
\begin{ytableau}
1 \\
2 & 3
\end{ytableau},
&
\begin{ytableau}
2 \\
3 & 1
\end{ytableau},
&
\begin{ytableau}
1 \\
3 & 2
\end{ytableau}.
\end{array}
\]
For a filling $S$, we let $S_{i,j}$ denote the entry in row $i$, column $j$ of $S$.

Let $Z = z_1, z_2, \ldots$ denote a countably infinite set of variables. The \emph{symmetric group} $\Sym_n$ is the group of permutations of $[n] = \{ 1, 2, \ldots, n \}$, and we say that a function $f \in \QQ[Z]$ is \emph{symmetric} if it is unchanged under the $\Sym_n$-action given by permuting indices, for any $n \in \NN$. Let $\Lambda \subset \QQ[Z]$ denote the subring of symmetric functions. Many bases for $\Lambda$ are known, but perhaps the most famous (and most useful from a representation-theoretic point of view) is the basis of \emph{Schur functions} $\{ s_\mu \}$, where $\mu$ ranges over the collection of all integer partitions.

Macdonald (\cite{Mac88}) defined an exceptional new basis for the ring $\Lambda_{(t,q)}$ of symmetric functions with coefficients in the field $\QQ(t.q)$ which simultaneously generalizes the Schur functions, Hall-Littlewood polynomials, and Jack symmetric functions, among others. Macdonald's polynomials were not given by an explicit formula --- rather, they were characterized by particular triangularity and orthogonality relations with respect to a $(t,q)$-deformation of the Hall inner product --- but a certain transformed version $\Htil_\mu(Z;t,q)$ of the Macdonald polynomial has an elegant combinatorial description, proved in \cite{HHL}. For any integer partition $\mu$, $\Htil_\mu(Z;t,q)$ may be described as a $(t,q)$-weighted sum over fillings of $\mu$, where the $t$- and $q$-weights are given by combinatorial statistics on fillings which were originally defined by Haglund (\cite{Hag04}). See \cite{Hag04} or \cite{HHL} for the precise definition of these statistics.

Moreover, $\Htil_\mu(Z;t,q)$ has an elegant expansion in the Schur basis:
\begin{equation}\label{eq:schur-exp}
\Htil_\mu(Z;t,q) = \sum_\lambda \Ktil_{\lambda \mu}(t,q) s_\lambda(Z).
\end{equation}
The coefficients $\Ktil_{\lambda \mu}(t,q)$ are \emph{a priori} rational functions in $t$ and $q$, but Macdonald conjectured that $\Ktil_{\lambda \mu}(t,q) \in \NN[t,q]$; this \emph{Macdonald positivity conjecture} was resolved in \cite{Hai01}, by realizing $\Ktil_{\lambda \mu}(t,q)$ as the doubly graded character multiplicities of a certain doubly graded $\Sym_n$-module, which we describe in the following section.

\section{Garsia-Haiman modules}\label{sec:GH}

Garsia and Haiman (\cite{GH93}) proposed the following representation-theoretic interpretation for $\Htil_\mu(Z;t,q)$. Consider the polynomial ring $\QQ[X,Y] = \QQ[x_1, \ldots, x_n ; y_1, \ldots, y_n]$ along with the diagonal $\Sym_n$-action given by permuting the $X$- and $Y$-variables simultaneously and identically; that is,
\[
\sigma \cdot f(x_1, \ldots, x_n ; y_1, \ldots, y_n) = f(x_{\sigma(1)}, \ldots, x_{\sigma(n)} ; y_{\sigma(1)}, \ldots, y_{\sigma(n)})
\] 
for any $f(X,Y) \in \QQ[X,Y]$ and $\sigma \in \Sym_n$. For $\mu \vdash n$, define the polynomial
\[
\Delta_\mu = \det(x_i^{p_j-1}y_i^{q_j-1})_{i,j=1}^n \in \QQ[X,Y],
\]
where $\{ (p_1,q_1), \ldots, (p_n, q_n) \}$ encode the coordinates of the cells of $\mu$, taken in any order. For instance, we label the cells of $\mu = (3,2)$ as follows:
\[
\ytableausetup{aligntableaux=bottom, boxsize=2em}
\begin{ytableau}
\scriptstyle{(2,1)} & \scriptstyle{(2,2)} \\
\scriptstyle{(1,1)} & \scriptstyle{(1,2)} & \scriptstyle{(1,3)}
\end{ytableau}.
\]
Then $\Delta_\mu \neq 0$ since the above biexponents are distinct, and $\Delta_\mu$ is an $\Sym_n$-alternating polynomial which is doubly homogeneous of $X$-degree $b(\mu) = \sum_{i=1}^{\ell(\mu)} (i-1) \mu_i$ and of $Y$-degree $b(\mu') = \sum_{j=1}^{\ell(\mu')} (j-1) \mu'_j$. Let
\[
\ii_\mu = \{ f \in \QQ[X,Y] : f(\tfrac{\partial}{\partial x_1}, \ldots, \tfrac{\partial}{\partial x_n}; \tfrac{\partial}{\partial y_1}, \ldots, \tfrac{\partial}{\partial y_n}) \Delta_\mu = 0 \}
\]
denote the ideal of polynomials whose corresponding differential operator annihilates $\Delta_\mu$. Note that, since $\Delta_\mu$ is doubly homogeneous and $\Sym_n$-alternating, $\ii_\mu$ is an $\Sym_n$-invariant, doubly homogeneous ideal.
\begin{definition}[\cite{GH93}]
The \emph{Garsia-Haiman module} is the quotient ring
\[
\Hcal_\mu = \QQ[X,Y] / \ii_\mu
\]
viewed as a doubly graded $\Sym_n$-module, with the $\Sym_n$-action induced by the diagonal action on $\QQ[X,Y]$.
\end{definition}
Since $\Hcal_\mu$ is doubly graded, we have a direct sum decomposition
\[
\Hcal_\mu = \bigoplus_{i=1}^{b(\mu)} \bigoplus_{j=1}^{b(\mu')} (\Hcal_\mu)_{i,j},
\]
where $(\Hcal_\mu)_{i,j}$ denotes the submodule spanned by elements of total $X$-degree $i$ and total $Y$-degree $j$.

Garsia and Haiman conjectured that the \emph{Frobenius series} of $\Hcal_\mu$ (defined below) is precisely $\Htil_\mu(Z;t,q)$. Due to known identities involving the usual Kostka numbers $K_{\lambda \mu}$, it is necessary that $\Hcal_\mu$ afford a doubly graded version of the regular representation of $\Sym_n$, and in particular that the dimension of $\Hcal_\mu$ is $n!$, for this identity to hold. Strikingly, this apparently weaker condition is sufficient to establish the desired result:
\begin{theorem}[\cite{Hai99}]
If the dimension of $\Hcal_\mu$ is $n!$, then the bigraded Frobenius series of $\Hcal_\mu$ given by
\[
Frob_{\Hcal_\mu}(Z;t,q) = \sum_{i,j} t^iq^j ch((\Hcal_\mu)_{i,j}),
\]
where $ch$ is the map which sends the irreducible $\Sym_n$-representation $S^\lambda$ to the Schur function $s_\lambda$, equals the transformed Macdonald polynomial $\Htil_\mu(Z;t,q)$.
\end{theorem}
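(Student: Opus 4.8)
The plan is to show that the bigraded Frobenius series, which I abbreviate $C_\mu(Z;t,q) := \sum_{i,j} t^i q^j \, ch((\Hcal_\mu)_{i,j})$, coincides with $\Htil_\mu(Z;t,q)$ by verifying that $C_\mu$ satisfies the Garsia--Haiman--Tesler triangularity characterization of $\Htil_\mu$: namely, $\Htil_\mu$ is the unique symmetric function over $\QQ(t,q)$, homogeneous of degree $n$, for which $\Htil_\mu[Z(1-q);t,q]$ is a $\QQ(t,q)$-linear combination of Schur functions $s_\lambda$ with $\lambda \trianglerighteq \mu$, $\Htil_\mu[Z(1-t);t,q]$ is a combination of $s_\lambda$ with $\lambda \trianglerighteq \mu'$, and $\langle \Htil_\mu, s_{(n)} \rangle = 1$. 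It is convenient to work throughout with the isomorphic copy of $\Hcal_\mu$ afforded by Macaulay's theory of inverse systems --- the $\Sym_n$-submodule $\mathcal{D}_\mu \subseteq \QQ[X,Y]$ spanned by all partial derivatives of $\Delta_\mu$, on which the bigrading is reversed --- since there the bidegree of $\Delta_\mu$ makes the relevant degree bounds transparent.

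The normalization condition should come cheaply from the fact that $\Delta_\mu$ is a \emph{diagonal harmonic}, i.e.\ is annihilated by every positive-degree $\Sym_n$-invariant constant-coefficient differential operator. Indeed, taking $\Sym_n$-invariants of $\QQ[X,Y]/\ii_\mu$ then leaves only the constants, so $\langle C_\mu, s_{(n)} \rangle = 1$; dually the sign representation occurs exactly once, spanned by $\Delta_\mu$ in bidegree $(b(\mu), b(\mu'))$, so $\langle C_\mu, s_{1^n} \rangle = t^{b(\mu)} q^{b(\mu')}$; and interchanging the $X$- and $Y$-variables yields the duality $C_{\mu'}(Z;t,q) = C_\mu(Z;q,t)$, matching $\Htil_{\mu'}(Z;t,q) = \Htil_\mu(Z;q,t)$. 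For the two triangularity conditions I would filter $\mathcal{D}_\mu$ by $Y$-degree --- and, symmetrically, by $X$-degree --- and analyze the associated graded, using the known ``boundary'' as an anchor: the specialization $C_\mu(Z;t,0)$ (equivalently, the $Y$-degree-zero component of $\Hcal_\mu$) is the graded Frobenius of the Garsia--Procesi module $R_\mu$, the cohomology ring of a Springer fiber, and by classical results (Garsia--Procesi, after Springer, De Concini--Procesi, and Tanisaki) this equals the Hall--Littlewood specialization $\Htil_\mu(Z;t,0)$, whose Schur expansion is supported on $\lambda \trianglerighteq \mu$; symmetrically $C_\mu(Z;0,q) = \Htil_\mu(Z;0,q)$, supported on $\lambda \trianglerighteq \mu'$. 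Feeding these boundary identities into the degree bounds that the grading of $\mathcal{D}_\mu$ forces on $C_\mu[Z(1-q);t,q]$ and $C_\mu[Z(1-t);t,q]$ should then yield the dominance-triangularity up to a correction term; the hypothesis $\dim \Hcal_\mu = n!$ enters precisely to pin down this correction, forcing the bigraded character to be exactly that of the regular representation and hence $C_\mu = \Htil_\mu$ by the uniqueness clause. Reading off the coefficient of $s_\lambda$ recovers $\Ktil_{\lambda\mu}(t,q) \in \NN[t,q]$ as a byproduct.

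The hard part will be the dominance control in the triangularity step. The crude estimate --- that no partial derivative of $\Delta_\mu$ has $Y$-degree exceeding $b(\mu')$ --- is immediate, but what one actually needs is the far sharper statement that after the plethystic substitution only Schur functions indexed above $\mu'$ (resp.\ $\mu$) in dominance order survive, and this is in effect already a shadow of Macdonald positivity; in the Garsia--Haiman treatment it rests on a delicate filtration argument, and propagating the known boundary slice (at $q=0$, resp.\ $t=0$) to all specializations, while keeping track of how much of the discrepancy is absorbed by the dimension hypothesis, is where the real work lies. A secondary subtlety is exactly this bookkeeping: the bare numerical equality $\dim \Hcal_\mu = n!$ does not on its own pin down the full bigraded character, so the argument must genuinely exploit the Springer-fiber data to promote a dimension count into an identity of symmetric functions. (Verifying the hypothesis $\dim \Hcal_\mu = n!$ is itself the truly deep input, established geometrically through the isospectral Hilbert scheme, and lies entirely outside the scope of this plan.)
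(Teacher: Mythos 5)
The paper does not prove this theorem; it is cited from Haiman's 1999 paper and used as a black box, so there is no in-paper argument to compare against.

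Your sketch correctly identifies the framework of the actual argument due to Garsia and Haiman: the axiomatic characterization of $\Htil_\mu$ as the unique degree-$n$ symmetric function with $\Htil_\mu[Z(1-q);t,q]$ supported on $\{s_\lambda : \lambda \trianglerighteq \mu\}$, $\Htil_\mu[Z(1-t);t,q]$ supported on $\{s_\lambda : \lambda \trianglerighteq \mu'\}$, and $\langle \Htil_\mu, s_{(n)} \rangle = 1$; the passage to the inverse-system module $D_\mu$; the Garsia--Procesi / Springer-fiber identification of the one-parameter slices at $q=0$ and $t=0$; and the $X \leftrightarrow Y$ duality matching $\Htil_{\mu'}(Z;t,q) = \Htil_\mu(Z;q,t)$. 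These are the right ingredients, and your observation that $\Delta_\mu$ is a diagonal harmonic, so the only $\Sym_n$-invariants in $\Hcal_\mu$ are the constants, correctly yields the normalization $\langle C_\mu, s_{(n)} \rangle = 1$.

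But the sketch stops at the load-bearing step, and you are candid about it. The proposal to ``filter by $Y$-degree, analyze the associated graded, and feed the boundary identities into the degree bounds'' does not supply a mechanism by which the plethystic triangularity emerges for general $(t,q)$; you yourself note that this is ``where the real work lies'' and that $\dim\Hcal_\mu = n!$ ``does not on its own pin down the full bigraded character.'' These are exactly the right worries, but they are left unresolved: no structural theorem is stated that converts the dimension count plus the Springer boundary data into the dominance-order support constraint. The cited argument needs a concrete bridge here --- roughly, a freeness / Cohen--Macaulay property of $\Hcal_\mu$ over the invariants in one set of variables, which is what upgrades the crude degree bound into the sharp triangularity --- and that bridge is what your plan identifies as missing without supplying. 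The plan is a faithful high-level account of the theorem, the inputs that feed it, and where the difficulty lives; it is not yet a proof.
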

Thus the problem was reduced to this comparatively simpler \emph{$n!$ conjecture}, which was later resolved by Haiman:
\begin{theorem}[\cite{Hai01}]
The dimension of $\Hcal_\mu$ is $n!$.
\end{theorem}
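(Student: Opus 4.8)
The plan is to follow Haiman's geometric approach, deducing the $n!$ conjecture from a Cohen--Macaulayness statement about the \emph{isospectral Hilbert scheme} of points in the plane. Let $\mathrm{Hilb}^n := \mathrm{Hilb}^n(\CC^2)$ be the Hilbert scheme parametrizing ideals $I \subseteq \CC[x,y]$ with $\dim_\CC(\CC[x,y]/I) = n$; it is a smooth, irreducible variety of dimension $2n$, and the monomial ideal $I_\mu$ whose quotient is spanned by the monomials indexed by the cells of $\mu$ is a fixed point of the torus $T = (\CC^*)^2$. The first step is to realize $\Hcal_\mu$ as a fiber of a natural sheaf on $\mathrm{Hilb}^n$: working over the universal family and the $n$-th symmetric product, one computes that the scheme-theoretic fiber over $I_\mu$ of the finite map $\rho\colon X_n \to \mathrm{Hilb}^n$ (defined below) has coordinate ring isomorphic, as a doubly graded $\Sym_n$-module, to $\Hcal_\mu$. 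Since $\rho$ is generically $n!$-to-$1$, upper semicontinuity of fiber dimension already gives $\dim \Hcal_\mu \ge n!$; the real content of the theorem is that the fiber length never jumps above $n!$.

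Here $X_n$ is the reduced fiber product of $\mathrm{Hilb}^n$ and $(\CC^2)^n$ over $\mathrm{Sym}^n(\CC^2)$ (the first map being Hilbert--Chow, the second the quotient): its points are tuples $(I, P_1, \dots, P_n)$ where $(P_1, \dots, P_n)$ is an ordering of the support-with-multiplicities of the subscheme defined by $I$, and $\rho$ forgets the ordering. The \emph{Procesi bundle} is $P := \rho_* \mathcal{O}_{X_n}$, a coherent, $\Sym_n \times T$-equivariant sheaf of generic rank $n!$. If one knows that $X_n$ is Cohen--Macaulay, then because $\mathrm{Hilb}^n$ is smooth and $\rho$ is finite and surjective, $\rho$ is automatically flat; hence $P$ is locally free of rank exactly $n!$, and its fiber at every point equals the coordinate ring of the scheme-theoretic fiber of $\rho$ there. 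Combined with the identification of the first paragraph, this yields $\dim \Hcal_\mu = \mathrm{rank}\,P = n!$. (Normality and the Gorenstein property of $X_n$ enter the argument as well and come out of the same analysis; the Gorenstein property, via a duality argument using the $T$-action, is moreover what pins down the $\Sym_n$-character of $\Hcal_\mu$ and so gives Macdonald positivity — but for the bare dimension count, Cohen--Macaulayness is what is needed.)

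The entire difficulty is therefore concentrated in proving that $X_n$ is Cohen--Macaulay, and the engine for this is Haiman's \emph{polygraph theorem}. For a function $f\colon [l] \to [n]$ set $W_f = \{(P_1, \dots, P_n, Q_1, \dots, Q_l) \in (\CC^2)^n \times (\CC^2)^l : Q_i = P_{f(i)}\text{ for all }i\}$, and let the \emph{polygraph} be $Z(n,l) = \bigcup_f W_f$, with coordinate ring $R(n,l)$. The polygraph theorem states that $R(n,l)$ is a \emph{free} module over the polynomial ring $\CC[x_1, \dots, x_n, y_1, \dots, y_n]$ pulled back along the first projection. One proves this by exhibiting an explicit spanning set of $R(n,l)$ --- built from products of difference functions $x_i - x_j$, $y_i - y_j$ and of the satellite coordinates, organized by combinatorial data attached to $f$ --- and then showing it is a basis by a double induction on $n$ and $l$. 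The inductive step is driven by short exact sequences relating $R(n,l)$, $R(n-1,l)$ and $R(n,l-1)$ (obtained geometrically by removing a point $P_n$, or by letting a satellite coordinate $Q_l$ specialize to, or break free of, the points $P_i$), a careful scheme-theoretic analysis of how the components $W_f$ intersect, and a Gr\"obner-type degeneration used to propagate freeness through the sequences. Keeping the explicit basis compatible with all of these maps simultaneously is the crux of the matter and is where the bulk of the work lies; this is the step I expect to be the main obstacle.

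Finally, freeness of $R(n,l)$ (for the values of $l$ relevant to the inductive construction of $\mathrm{Hilb}^n$ and its universal family) translates into local freeness of the pushforward sheaves controlling $X_n$, hence into the (arithmetic) Cohen--Macaulayness of $X_n$ over $\mathrm{Hilb}^n$; normality then follows from Serre's criterion, the $R_1$ condition being verifiable directly away from a small bad locus, and the Gorenstein property from a computation of the dualizing sheaf of $X_n$ exploiting freeness and the $T$-action. With $X_n$ Cohen--Macaulay in hand, the dimension count of the first two paragraphs gives $\dim \Hcal_\mu = n!$. I would add that for special shapes --- in particular the hook shapes relevant to the present paper --- one can bypass the geometry entirely and instead prove $\dim \Hcal_\mu = n!$ by writing down an explicit combinatorial basis, such as the Adin--Remmel--Roichman basis used below, and checking that it spans $\Hcal_\mu$ and has exactly $n!$ elements.
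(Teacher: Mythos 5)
The paper states this theorem as a black-box citation of Haiman's work \cite{Hai01} and does not reproduce a proof; your sketch is an accurate high-level account of Haiman's argument, reducing the $n!$ theorem to Cohen--Macaulayness of the isospectral Hilbert scheme $X_n$ (hence flatness of $\rho$ and local freeness of the Procesi bundle), which in turn rests on the polygraph theorem. Your closing observation --- that for hook shapes one can sidestep the geometry entirely by exhibiting an explicit combinatorial basis of size $n!$, such as the Adin--Remmel--Roichman basis --- is precisely the perspective this paper adopts for its own results.
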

This in turn proves the Macdonald positivity conjecture, since it realizes $\Ktil_{\lambda \mu}(t,q)$ as the doubly graded multiplicity of $S^\lambda$ in $\Hcal_\mu$. The proof of the $n!$ conjecture in \cite{Hai01}, however, relies on deep algebro-geometric results, and it remains an open problem to construct an explicit basis for $\Hcal_\mu$ in general.

Many bases for $\Hcal_\mu$ are known when $\mu$ has hook shape, however (see \cite{ARR},\cite{All},\cite{AG},\cite{Aval},\cite{GH96},\cite{Stem}). We opt to work with the $k$-th Artin basis of \cite{ARR}, which was also known to Garsia and Haiman in \cite{GH95}. Using the following terminology, we reframe this basis in a form which is more amenable to our later constructions.


\begin{definition}
Let $SF(\mu)$ denote the set of standard fillings of $\mu$. Given $S \in SF(a,1^\ell)$, define a \emph{row inversion} to be a pair $S_{1,i} > S_{1,j}$ where $i < j$. Similarly, define a \emph{column inversion} to be a pair $S_{j,1} > S_{i,1}$ for $i < j$ (note the swapped indices). Denote the collection of row (resp. column) inversions in $S$ by
\begin{align*}
\mathsf{rowInv}(S) &= \{ (t,r) : t > r, t \text{ left of } r \text{ in row $1$ of } S \}, \\
\mathsf{colInv}(S) &= \{ (d,c) : d > c, d \text{ above } c \text{ in column $1$ of } S \}.
\end{align*}
\end{definition}

\begin{definition}
Given $S \in SF(a,1^\ell)$, define the polynomial $\varphi_S(X,Y) \in \QQ[X,Y]$ by
\[
\varphi_S(X,Y) = \prod_{(d,c) \in \mathsf{colInv}(S)} x_d \prod_{(t,r) \in \mathsf{rowInv}(S)} y_r
\]
(cf. \cite{ARR}, Definition 1.6).
\end{definition}

\begin{example}
The following standard filling of $\mu = (5,1^4)$:
\[
S =
\ytableausetup{centertableaux, boxsize = 1.2em}
\begin{ytableau}
4 \\
1 \\
9 \\
7 \\
5 & 6 & 3 & 2 & 8
\end{ytableau} 
\]
has
\begin{align*}
\mathsf{rowInv}(S) &= \{ (5,\mathbf{3}), (5,\mathbf{2}), (6,\mathbf{3}), (6,\mathbf{2}), (3,\mathbf{2}) \}, \\
\mathsf{colInv}(S) &= \{ (\mathbf{4},1), (\mathbf{9},7), (\mathbf{9},5), (\mathbf{7},5) \},
\end{align*}
so $\varphi_S(X,Y) = x_4x_7x_9^2y_2^3y_3^2$.
\end{example}

\begin{theorem}[\cite{ARR}, Corollary 1.7]
The set
\[
\{ \varphi_S(X,Y) : S \in SF(a,1^\ell) \}
\]
forms a basis for the Garsia-Haiman module $\Hcal_{(a,1^\ell)}$.
\end{theorem}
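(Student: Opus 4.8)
The plan is to deduce the statement from \cite{ARR}, Corollary 1.7 by exhibiting explicitly the promised translation between standard fillings of $(a,1^\ell)$ and permutations. As a first, essentially bookkeeping, observation: a standard filling of $(a,1^\ell)$ is nothing more than an arbitrary bijection from the $n = a+\ell$ cells to $[n]$, so $|SF(a,1^\ell)| = n! = \dim \Hcal_{(a,1^\ell)}$ by the $n!$ theorem (\cite{Hai01}; for hooks the dimension count is elementary). Hence it would suffice to prove that $\{\varphi_S : S \in SF(a,1^\ell)\}$ is linearly independent in $\Hcal_{(a,1^\ell)}$, or that it spans; the identification I describe next yields both at once, by showing that this set literally coincides with the $k$-th Artin basis of \cite{ARR}.

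For the identification, given $S \in SF(a,1^\ell)$ read the first column of $S$ from top to bottom and then continue rightward along the first row, obtaining the word
\[
w(S) = S_{\ell+1,1}\,S_{\ell,1}\cdots S_{2,1}\,S_{1,1}\,S_{1,2}\cdots S_{1,a} \in \Sym_n.
\]
Declaring that the first $\ell+1$ letters of a permutation record column $1$ (the corner being the $(\ell+1)$-st letter) and the last $a$ letters record row $1$ makes $S \mapsto w(S)$ a bijection $SF(a,1^\ell) \to \Sym_n$. Under it, $\mathsf{colInv}(S)$ is precisely the set of inversions among the first $\ell+1$ letters of $w(S)$ and $\mathsf{rowInv}(S)$ the set of inversions among the last $a$ letters, so that the exponent of $x_d$ in $\varphi_S$ equals the number of values smaller than $d$ occurring after $d$ in the column-word, and the exponent of $y_r$ equals the number of values larger than $r$ occurring before $r$ in the row-word. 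It then remains to compare, exponent by exponent, with the monomial that \cite{ARR} attaches to $w(S)$ in its definition of the $k$-th Artin basis (with $k$ the leg length, up to that paper's indexing conventions), and check that the two agree on the nose; once this is done, $\{\varphi_S : S \in SF(a,1^\ell)\}$ is exactly that Artin basis and the theorem follows.

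The only real obstacle is reconciling conventions. The source \cite{ARR} is written in terms of permutations and the present setup in terms of fillings in French notation, and the two make superficially opposite choices about the direction in which one-line inversions are counted --- this is exactly why the definition of $\mathsf{colInv}$ above carries deliberately swapped indices relative to $\mathsf{rowInv}$. One must verify not just that the relevant statistics are equidistributed but that they match index by index under the specific bijection $w$; choosing the wrong reading (top-to-bottom versus bottom-to-top along the leg, or which end of the arm starts the word) would quietly break the argument, so most of the verification goes into pinning this down. As a safeguard, or for a self-contained alternative, one can instead expand $\Delta_{(a,1^\ell)}$ by Laplace expansion separating the $\ell$ ``leg columns'' from the $a$ ``arm columns'',
\[
\Delta_{(a,1^\ell)} = \sum_{\substack{A \sqcup B = [n]\\ |B| = \ell}} \pm \Big(\prod_{b \in B} x_b\Big)\,\prod_{\substack{b<b'\\ b,b'\in B}}(x_{b'}-x_b)\,\prod_{\substack{j<j'\\ j,j'\in A}}(y_{j'}-y_j),
\]
apply $\varphi_S(\partial_X,\partial_Y)$ to this, and argue that the resulting family of polynomials is triangular with respect to a term order refining the bigrading; this replaces the convention-chasing with a longer triangularity argument of the kind used to establish monomial bases of Garsia--Haiman modules.
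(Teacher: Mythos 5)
Your primary route --- exhibiting the explicit reading word $w(S)$ (column top-to-bottom, then row left-to-right, sharing the corner), observing that $\mathsf{colInv}$ and $\mathsf{rowInv}$ become exactly the inversions of the column-word and the row-word respectively, and then matching $\varphi_S$ exponent-for-exponent against the $k$-th Artin monomial that \cite{ARR} attaches to $w(S)$ --- is precisely the translation the paper announces (``we reframe this basis in a form which is more amenable to our later constructions'') but does not spell out, so for this theorem, which the paper states purely as a citation, you are taking essentially the same approach, and your top-to-bottom choice for the column is the one that reconciles the deliberately swapped indices in the definition of $\mathsf{colInv}$ with ordinary left-to-right permutation inversions. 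Your self-contained fallback via Laplace expansion of $\Delta_{(a,1^\ell)}$ along the leg columns plus a term-order triangularity argument is, however, a genuinely different alternative from the one the paper sketches in its Remark, which invokes the Garsia--Haiman orbit harmonics machinery: to each $S$ one attaches an orbit point $p_S$ and a polynomial $\psi_S$ whose leading term is $\varphi_S$, shows the evaluation matrix $(\psi_S(p_T))$ is upper-triangular with nonzero diagonal, and uses symmetry of the Hilbert series. Both are triangularity arguments, but over different data (differentiated Laplace minors ordered by a term order versus evaluations at specialized orbit points); the orbit-harmonics version has the advantage of plugging into a ready-made Garsia--Haiman theorem, while yours would require building the term order from scratch. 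Neither alternative is carried out in full detail in the paper, which ultimately rests, like your main route, on citing \cite{ARR}.
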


\begin{remark}
The theory of orbit harmonics developed by Garsia and Haiman (\cite{GH}) lays the groundwork for a new, direct proof that $\{ \varphi_S \}$ forms a basis for $\Hcal_\mu$, which we sketch here. Let $\alpha_1, \ldots, \alpha_n, \beta_1, \ldots, \beta_n$ be distinct rational numbers. To any standard filling $S$ we associate an \emph{orbit point} $p_S = (\alpha_{i_1}, \ldots, \alpha_{i_n}; \beta_{j_1}, \ldots, \beta_{j_n})$, and a polynomial
\[
\psi_S = \prod_{(d,c) \in \mathsf{colInv}(S)} (x_d - \alpha_{\mathsf{row}(c)}) \prod_{(t,r) \in \mathsf{rowInv}(S)} (y_r - \beta_{\mathsf{col}(t)}),
\]
so that $\varphi_S$ is the leading term of $\psi_S$. Garsia and Haiman proved that if
\begin{enumerate}
\item the matrix $(\psi_S(p_T))$ is nonsingular, and
\item the Hilbert series of $\{ \psi_S \}$ is symmetric,
\end{enumerate}
then the leading terms $\{ \varphi_S \}$ form a basis for $\Hcal_\mu$. There exists a total ordering on standard fillings under which $(\psi_S(p_T))$ is upper-triangular with nonzero diagonal entries (hence nonsingular), and the symmetry of the Hilbert series is evidenced by the fact that $\varphi_S(t,q) = t^{n(\mu)}q^{n(\mu')} \varphi_{\widetilde{S}}(t^{-1},q^{-1})$, where $\widetilde{S}$ is obtained from $S$ by replacing the entry $i$ with $n-i+1$.
\end{remark}

\section{A proof of the $\frac{n!}{2}$ conjecture for hook shapes}\label{sec:n-fact}

Now equipped with an explicit basis for $\Hcal_{(a,1^\ell)}$, we can study the remarkable intersection properties of these modules which were conjectured in \cite{BG}. In full generality, the \emph{$\frac{n!}{k}$ conjecture} is:
\begin{conj}[\cite{BG}]\label{conj:n!/k}
Let $\lambda \vdash n+1$, and let $\mu^{(1)}, \ldots, \mu^{(k)}$ be partitions of $n$ each be obtained from $\lambda$ by removing a removable cell from the Young diagram of $\lambda$. Then,
\[
\dim\left( \bigcap_{i=1}^k \Hcal_{\mu^{(i)}} \right) = \frac{n!}{k}.
\]
\end{conj}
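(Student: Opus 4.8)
Conjecture~\ref{conj:n!/k} is, in full generality, open; here is the line of attack I would pursue, after which the remainder of this section carries out the hook specialization $\lambda=(a,1^\ell)$ (the only case with $k>1$, treated by elementary combinatorial means). First I would reduce the statement to a dimension count inside a single polynomial ring. Realize each $\Hcal_\mu$ concretely as the span $M_\mu\subseteq\QQ[x_1,\dots,x_n;y_1,\dots,y_n]$ of all partial derivatives of $\Delta_\mu$: this is a doubly graded $\Sym_n$-module isomorphic to $\Hcal_\mu=\QQ[X,Y]/\ii_\mu$, and under the apolarity pairing (Macaulay inverse systems) one has $M_\mu^{\perp}=\ii_\mu=\operatorname{Ann}(\Delta_\mu)$. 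Since every $\mu^{(i)}$ is a partition of $n$, the subspaces $M_{\mu^{(i)}}$ all lie in one ring, so $\bigcap_i M_{\mu^{(i)}}$ is an honest doubly graded $\Sym_n$-submodule, and apolarity turns intersections of inverse systems into sums of ideals: $\big(\bigcap_i M_{\mu^{(i)}}\big)^{\perp}=\sum_i\ii_{\mu^{(i)}}$. Hence $\bigcap_i M_{\mu^{(i)}}$ is dual to the ``universal common quotient'' $V_\lambda:=\QQ[X,Y]\big/(\ii_{\mu^{(1)}}+\cdots+\ii_{\mu^{(k)}})$ --- the largest common quotient of $\Hcal_{\mu^{(1)}},\dots,\Hcal_{\mu^{(k)}}$ --- and so has the same bigraded Frobenius series as $V_\lambda$; it suffices to show $\dim V_\lambda=n!/k$, or better, to compute the bigraded Frobenius series $F_\lambda(Z;t,q)$ of $V_\lambda$ and read off the dimension from $F_\lambda(Z;1,1)$.

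Next I would place the $\mu^{(i)}$ into Haiman's geometry. Expanding $\Delta_\lambda$ as a polynomial in the last pair of variables $x_{n+1},y_{n+1}$, the coefficient of the monomial attached to a cell $c$ of $\lambda$ is $\pm\Delta$ of the $n$-cell diagram $\lambda\setminus\{c\}$, which is $\pm\Delta_{\mu^{(i)}}$ precisely when $c=c_i$ is removable. Equivalently, each monomial ideal $I_{\mu^{(i)}}\subset\CC[x,y]$ is obtained from $I_\lambda$ by adjoining a single monomial, $c_i$ being removable makes this sum an ideal, and since $\operatorname{soc}(\CC[x,y]/I_\lambda)$ has dimension $k$, the colength-$n$ ideals $J\supseteq I_\lambda$ form a projective space $\mathbb{P}^{k-1}=\mathbb{P}\big(\operatorname{soc}(\CC[x,y]/I_\lambda)\big)\hookrightarrow\mathrm{Hilb}^n(\CC^2)$ whose $k$ torus-fixed points are exactly $I_{\mu^{(1)}},\dots,I_{\mu^{(k)}}$; this is also the fiber over $I_\lambda$ of the nested Hilbert scheme $\mathrm{Hilb}^{n,n+1}(\CC^2)\to\mathrm{Hilb}^{n+1}(\CC^2)$. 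By Haiman's theorem $\Hcal_\mu$ is the fiber at $I_\mu$ of the Procesi bundle $P=\rho_*\mathcal O_{X_n}$ on $\mathrm{Hilb}^n(\CC^2)$, and since $P$ is locally free the sheaf of relations in the presentation $\QQ[X,Y]\otimes\mathcal O\twoheadrightarrow P$ is locally free, the identifications $\ii_\mu=\ii|_{I_\mu}$, $\Hcal_\mu=P|_{I_\mu}$ are fiberwise exact, and they restrict flatly to this $\mathbb{P}^{k-1}$. The aim of this step is to identify $V_\lambda$ (equivalently $\bigcap_i M_{\mu^{(i)}}$) with a cohomology group of $P|_{\mathbb{P}^{k-1}}$, suitably twisted so that global objects are forced to agree after specializing to each removable cell; pinning down the correct twist, hence the torus-equivariant splitting type of $P|_{\mathbb{P}^{k-1}}$, is the heart of the matter.

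Granting such a description, $F_\lambda(Z;t,q)$ is then computed by Atiyah--Bott localization on $\mathbb{P}^{k-1}$: the fixed-point contributions are the known characters $\Htil_{\mu^{(i)}}(Z;t,q)$, and the tangent weights of $\mathbb{P}^{k-1}$ at the fixed points are differences of the arm/leg/content data of the removable cells $c_1,\dots,c_k$. Assembling these should reproduce the ``science-fiction'' formula of Bergeron and Garsia for $F_\lambda(Z;t,q)$; a routine computation of the specialization then gives $F_\lambda(Z;1,1)=\tfrac1k\,h_1^{\,n}$ (each $\Htil_{\mu^{(i)}}$ collapses to $h_1^{\,n}$ and the localization reduces to a weighted sum of $k$ equal terms), whence $\dim V_\lambda=n!/k$. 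As an independent check, and as a self-contained route when $k$ is small, I would verify the recursive companion of the conjecture --- that for two consecutive removable cells $\Hcal_{\mu^{(i)}}\cap\Hcal_{\mu^{(i+1)}}$ is isomorphic, up to a bidegree shift, to $\Hcal_{\mu^{(i)}\cap\mu^{(i+1)}}$ (a partition of $n-1$) --- and run inclusion--exclusion over the poset of subsets of removable cells.

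The decisive obstacle is the second step: showing that $\bigcap_i M_{\mu^{(i)}}$ is honestly computed by cohomology of $P$ over this particular $\mathbb{P}^{k-1}$, rather than merely mapping to or from it, and controlling $P|_{\mathbb{P}^{k-1}}$. Haiman's vanishing and Cohen--Macaulayness theorems are proved for $P$ (and its tensor powers pulled back to $X_n$) over all of $\mathrm{Hilb}^n(\CC^2)$, not for its restriction to a linear subspace cut out inside one Hilbert scheme, so a new input is needed --- most plausibly a local analysis of the isospectral Hilbert scheme $X_n$ over the point $I_\lambda$, proving that $\rho^{-1}(\mathbb{P}^{k-1})$ (equivalently the scheme-theoretic fiber over $I_\lambda$ in $\mathrm{Hilb}^{n,n+1}$) is reduced and Cohen--Macaulay of the expected dimension. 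When $\lambda$ is a hook one has $k\le 2$, so $\mathbb{P}^{k-1}$ is a point or a $\mathbb{P}^1$, the splitting problem is vacuous or immediate, and the whole count collapses to the explicit manipulation of the Adin--Remmel--Roichman basis $\{\varphi_S\}$ carried out in the rest of this section; it is exactly for $k\ge 3$ that the geometry becomes unavoidable.
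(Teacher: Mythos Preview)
The statement is a \emph{conjecture}; the paper does not prove it in general, only the hook specialization $k=2$ (Theorem~\ref{nfact/2}). For general $k$ what you have written is a research outline, not a proof, and you say so yourself: the ``decisive obstacle'' you name --- identifying $\bigcap_i M_{\mu^{(i)}}$ with cohomology of the Procesi bundle over the $\mathbb{P}^{k-1}$ of colength-$n$ ideals containing $I_\lambda$, and controlling the equivariant splitting type of $P|_{\mathbb{P}^{k-1}}$ --- is exactly the missing input. Without it the Atiyah--Bott step has nothing to localize, and you are left with the Bergeron--Garsia science-fiction formula as a conjecture rather than a consequence. The apolarity reduction $\bigl(\bigcap_i M_{\mu^{(i)}}\bigr)^\perp=\sum_i\ii_{\mu^{(i)}}$ is correct and a reasonable first move, but nothing after it is established.

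For the hook case you give no argument; you defer to ``the explicit manipulation of the Adin--Remmel--Roichman basis carried out in the rest of this section,'' which \emph{is} the paper's proof. One caution even there: you work throughout with $M_\mu=D_\mu$, the span of partial derivatives of $\Delta_\mu$, and intersect those subspaces. The paper (Remark~\ref{rem:other-one}) distinguishes this from the intersection it actually computes, which embeds $\Hcal_\mu$ in $\QQ[X,Y]$ via the ARR monomial basis; the two realizations are isomorphic as modules but sit differently inside $\QQ[X,Y]$, and the paper explicitly notes that its basis for $\Hcal_\mu\cap\Hcal_\rho$ does \emph{not} transfer to one for $D_\mu\cap D_\rho$. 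Your apolarity setup addresses the $D_\mu$ formulation (the one in \cite{BG}), not the $\Hcal_\mu$ version proved here, so be explicit about which intersection you mean.
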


We prove the $\frac{n!}{k}$ conjecture when $\lambda$ has hook shape; in particular, the only nontrivial case is $k=2$, since a hook shape only has two removable cells. These ideas are also pursued in \cite{BH}, where the authors obtain a plethystic formula for the Frobenius series of the intersection of two Garsia-Haiman modules with hook shape. For the rest of the paper, assume $\lambda = (a,1^\ell)$ is a partition of $n+1 := a+\ell$ with $a \ge 2$ and $\ell \ge 1$ --- the conjecture is trivial otherwise --- and let $\mu = (a,1^{\ell-1}), \rho = (a-1,1^\ell)$ be partitions of $n$ obtained from $\lambda$ by removing a cell from the end of the first column, and from the end of the first row, respectively.

\begin{theorem}[$\frac{n!}{2}$ for hook shapes]\label{nfact/2}
For partitions $\mu, \rho$ as defined above,
\[
\dim (\Hcal_\mu \cap \Hcal_\rho) = \frac{n!}{2}.
\]
\end{theorem}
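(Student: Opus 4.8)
\emph{Reduction to combinatorics.} The plan is to reduce Theorem~\ref{nfact/2} to a combinatorial identity about the $\varphi$-basis and prove it with an explicit bijection. By Corollary~1.7 of \cite{ARR}, $\Phi_\mu := \{\varphi_S : S \in SF(\mu)\}$ and $\Phi_\rho := \{\varphi_T : T \in SF(\rho)\}$ are bases of $\Hcal_\mu$ and $\Hcal_\rho$; each has $n!$ elements, which are distinct monomials since linearly independent monomials cannot coincide. Realizing both modules inside $\QQ[X,Y]$ via these monomial bases, $\Hcal_\mu \cap \Hcal_\rho$ is the span of the monomials lying in both bases, so the theorem is equivalent to $|\Phi_\mu \cap \Phi_\rho| = n!/2$. (That this monomial realization does compute the intersection in the sense of Conjecture~\ref{conj:n!/k} is a point I would pin down first, through the orbit-harmonics set-up of the Remark or by appeal to \cite{BG}.)

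\emph{The two halves.} Directly from the definition of $\varphi_S$, one has $\deg_X \varphi_S = |\mathsf{colInv}(S)|$ and $\deg_Y \varphi_S = |\mathsf{rowInv}(S)|$, the inversion numbers of the column word (read top to bottom) and the row word (read left to right) of $S$. Let $\mathcal A_\mu \subseteq SF(\mu)$ be the set of fillings with $S_{1,1} < S_{1,a}$ (corner entry smaller than the entry ending the first row) and $\mathcal A_\rho \subseteq SF(\rho)$ the set of fillings with $T_{1,1} > T_{\ell+1,1}$ (corner entry larger than the entry ending the first column). Swapping the entries in these two distinguished cells is a fixed-point-free involution of $SF(\mu)$ (resp.\ $SF(\rho)$) exchanging each set with its complement, so $|\mathcal A_\mu| = |\mathcal A_\rho| = n!/2$. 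The claim to prove is $\Phi_\mu \cap \Phi_\rho = \varphi(\mathcal A_\mu) = \varphi(\mathcal A_\rho)$; since $S \mapsto \varphi_S$ is injective, this yields the identity above.

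\emph{The bijection.} For the inclusion $\varphi(\mathcal A_\mu) \subseteq \Phi_\mu \cap \Phi_\rho$ I would construct a bijection $\Psi : \mathcal A_\mu \to \mathcal A_\rho$ with $\varphi_{\Psi(S)} = \varphi_S$. Passing from $\mu$ to $\rho$ deletes the cell ending the first row and creates a new cell atop the first column; moving the entry directly would change $\varphi_S$, so one instead first applies a Foata-type rearrangement to the row word --- a shuffle permuting the row entries while preserving all of its inversions, hence the entire $y$-part of $\varphi_S$, and repositioning things so one cell can be deleted without affecting the monomial --- and then a companion Foata-type rearrangement of the column word that absorbs the new entry while preserving its inversions and hence the $x$-part. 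Showing that these two ``Foata-like'' maps preserve exactly the intended row/column inversions is the technical core, in the spirit of Foata's fundamental transformation; one then checks $\Psi$ lands in $\mathcal A_\rho$ and inverts it by running the procedure backwards.

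\emph{No extra common monomials, and the main obstacle.} It remains to show $\varphi_S \notin \Phi_\rho$ whenever $S \in SF(\mu) \setminus \mathcal A_\mu$, equivalently that decoding an arbitrary $T \in SF(\rho)$ back through the construction above always yields a filling in $\mathcal A_\mu$; with the bijection and injectivity of $\varphi$ this gives $\Phi_\mu \cap \Phi_\rho = \varphi(\mathcal A_\mu)$ and hence $|\Phi_\mu \cap \Phi_\rho| = n!/2$. I expect the two hardest points to be (i)~getting the Foata-type maps exactly right so their composition is a genuine $\varphi$-preserving bijection between the prescribed halves, and (ii)~this ``no extra coincidences'' step, which is precisely what forces the count to equal $n!/2$: for $S \notin \mathcal A_\mu$ one has $\deg_Y \varphi_S \ge a-1$ (the row word, having first entry larger than last, has at least $a-1$ inversions) while every monomial of $\Phi_\rho$ has $\deg_Y \le \binom{a-1}{2}$, which settles the case at once when $a \le 3$ but demands a finer, variable-by-variable (or outright bijective) obstruction for larger arms.
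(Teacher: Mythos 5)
Your skeleton matches the paper's proof: you use the ARR monomial basis, define the same ``halves'' of the two sets of standard fillings (what the paper calls $SF_<(\mu)$ and $SF_<(\rho)$), propose a $\varphi$-preserving Foata-type bijection between them, and then need a disjointness argument to rule out any further common monomials. Your reduction to counting $|\Phi_\mu \cap \Phi_\rho|$ is also sound, since two subspaces of $\QQ[X,Y]$ each spanned by a set of distinct monomials intersect in the span of the common monomials. But two genuine gaps remain, both of which you flag as the hard points without filling them.

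First, the bijection is only described, not constructed. The paper's map is $\theta = \mathsf{leg}_v \circ \mathsf{arm}_u \circ \mathsf{bump}$: $\mathsf{bump}$ physically relocates the cells, $\mathsf{arm}_u$ is the row-Foata shuffle that reinstates precisely the row inversions lost when $u = S_{1,1}$ leaves the first row (each of the entries $r_i < u$ picks up exactly one new larger neighbor to its left), and $\mathsf{leg}_v$ is the dual column-Foata shuffle that removes precisely the new column inversions introduced by the new corner entry $v$. Verifying that these maps alter exactly the intended inversions and no others --- that the block structure guarantees one-new-inversion-per-$r_i$, no collateral inversions inside blocks, and no destroyed inversions --- is the content of the paper's Theorem~\ref{l-terms} and is where a careless choice of ``Foata-like map'' would break. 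Second, your disjointness argument via $\deg_Y$ only works for $a \le 3$, exactly as you observe. The paper's Lemma~\ref{lem-1} supplies the needed variable-by-variable obstruction: for $S \in SF_>(\mu)$, set $r = S_{1,a}$, let $t_1, \ldots, t_k$ be the first-row entries $< S_{1,1}$ with $t_1, \ldots, t_j$ those also $> r$; then the monomial $y_r^{a-k-1+j} y_{t_1} \cdots y_{t_k}$ divides $\varphi_S$, and if it divided some $\varphi_T$ with $T \in SF(\rho)$ one could force $a$ distinct entries into the first row of $T$, impossible since $\rho$ has only $a-1$ cells there. That is the finer obstruction you anticipated needing; without it (and without the explicit $\mathsf{arm}/\mathsf{leg}$ construction) the proposal is a correct plan but not yet a proof.
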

The rest of Section \ref{sec:n-fact} is devoted to a proof of Theorem \ref{nfact/2}.

\begin{definition}
As above, let $SF(\mu)$ (resp. $SF(\rho)$) denote the set of standard fillings of $\mu$ (resp. $\rho$). Define a map $\mathsf{bump} : SF(\mu) \to SF(\rho)$, where $\mathsf{bump}(S)$ is the filling of $\rho$ obtained from $S$ by moving each entry in the first column up one row, and then pushing each remaining entry in the first row to the left one column; that is,
\[
\mathsf{bump} \left(
\ytableausetup{centertableaux, boxsize=2.3em}
\begin{ytableau}
v_{\ell} \\
\vdots \\
v_2 \\
v_1 & u_1 & u_2 & \cdots & u_{a-1}
\end{ytableau}
\right)
=
\begin{ytableau}
v_\ell \\
\vdots \\
v_2 \\
v_1 \\
u_1 & u_2 & \cdots & u_{a-1}
\end{ytableau}.
\]
\end{definition}
\noindent Now consider the subsets
\begin{align*}
SF_<(\mu) &= \{ S \in SF(\mu) : S_{1,1} < S_{1,a} \}, \\
SF_<(\rho) &= \{ T \in SF(\rho) : T_{\ell+1,1} < T_{1,1} \},
\end{align*}
where $S_{i,j}$ denotes the entry in row $i$, column $j$ of $S$.

We have $\# SF_<(\mu) = \# SF_<(\rho) = \frac{n!}{2}$, and we claim furthermore that
\[
\{ \varphi_S : S \in SF_<(\mu) \} = \{ \varphi_T : T \in SF_<(\rho) \}.
\] 
To prove this assertion, we will define a bijective map $\theta : SF_<(\mu) \rightarrow SF_<(\rho)$ such that $\varphi_S = \varphi_{\theta(S)}$ for any $S \in SF_<(\mu)$. 

There are two main steps in defining the map $\theta$:
\begin{enumerate}
\item Given a filling $S \in SF_<(\mu)$, consider $\mathsf{bump}(S) \in SF(\rho)$. This new filling is potentially missing some row inversions in $S$ which were created by $u = S_{1,1}$, which now resides in the second row, so we must rearrange what remains of the first row to reintroduce the necessary inversions.
\item After rearranging the first row there is a new entry $v$ in row $1$, column $1$, so we may have introduced some column inversions which were not there before, and we must rearrange the first column to remove these new inversions.
\end{enumerate}

To this end, we define a map $\mathsf{arm}_u$ which reintroduces the row inversions removed in the first step, and a map $\mathsf{leg}_v$ which removes the column inversions introduced in the second. In what follows, let
\[
\Wcal = \{ w_1w_2 \cdots w_m : m, w_i \in \NN \}
\]
denote the set of words of finite length in the alphabet $\NN$.

\begin{definition}
For a fixed $u \in \NN$, consider the set of words
\[
\Acal_u = \{ w_1w_2\cdots w_m : m \in \NN, w_i \in \NN - \{ u \}, w_m > u \}.
\]
Define a map $\mathsf{arm}_u: \Acal_u \to \Wcal$ as follows: given $w = w_1 w_2 \cdots w_m \in \Acal_u$, let $b_1, b_2, \ldots, b_j$ denote the indices for which $w_{b_i} < u$. If there are no such indices, then define $\mathsf{arm}_u(w) = w$. Otherwise,
\begin{enumerate}
\item Draw a vertical bar immediately to the left of $w_{b_i}$ if either $w_{b_i - 1} > u$, or if $i = 1$ and $b_1 = 1$. 

\item Within each newly created block which contains at least one of the $w_{b_i}$'s, move the leftmost entry which is $> u$ to the front of the block, immediately right of the (leftmost) vertical bar. 
\end{enumerate}
Define the resulting word to be $\mathsf{arm}_u(w)$. This map is well-defined on $\Acal_u$, since the last letter in $w \in \Acal_u$ is required to be $> u$; that is, we will always have an entry $>u$ to shuffle within each block.
\end{definition}

\begin{example}
For $w = 49263187 \in \Acal_5$, draw vertical bars to the left of $4$, $2$, and $3$, and within each block shuffle the leftmost number $>5$ to the front:
\[
\begin{tikzpicture}
\draw[thick] (0.3,-0.3) -- (0.3,0.3);
\draw (0.3,-0.2) node (B1) {};
\draw (0.5,0) node[darkred] {$4$};
\draw (1,0) node (C1) {$9$};
\draw[thick] (1.3,-0.3) -- (1.3,0.3);
\draw (1.3,-0.2) node (B2) {};
\draw (1.5,0) node[darkred] {$2$};
\draw (2,0) node (C2) {$6$};
\draw[thick] (2.3,-0.3) -- (2.3,0.3);
\draw (2.3,-0.2) node (B3) {};
\draw (2.5,0) node[darkred] {$3$};
\draw (3,0) node[darkred] {$1$};
\draw (3.5,0) node (C3) {$8$};
\draw (4,0) node {$7$};
\draw [->,darkgreen] (C1.south) to [out=235,in=305] (B1.south east);
\draw [->,darkgreen] (C2.south) to [out=235,in=305] (B2.south east);
\draw [->,darkgreen] (C3.south) to [out=235,in=305] (B3.south east);

\draw (5.25,0) node {$\longmapsto$};

\draw[thick] (6.3,-0.3) -- (6.3,0.3);
\draw (6.5,0) node {$9$};
\draw (7,0) node {$4$};
\draw[thick] (7.3,-0.3) -- (7.3,0.3);
\draw (7.5,0) node {$6$};
\draw (8,0) node {$2$};
\draw[thick] (8.3,-0.3) -- (8.3,0.3);
\draw (8.5,0) node {$8$};
\draw (9,0) node {$3$};
\draw (9.5,0) node {$1$};
\draw (10,0) node {$7$};
\end{tikzpicture}
\]
So, $\mathsf{arm}_5(49263187) = 94628317$.
\end{example}

\begin{definition}
For a fixed $v \in \NN$, consider the set of words
\[
\Lcal_v = \{ w_1w_2\cdots w_m : m \in \NN, w_i \in \NN - \{ v \}, w_1 < v \}.
\]
Define $\mathsf{leg}_v : \Lcal_v \to \Wcal$ as follows: given $w = w_1 w_2 \cdots w_m \in \Lcal_v$, let $c_1, c_2, \ldots, c_k$ denote the indices for which $w_{c_i} > v$. If there are no such indices, then define $\mathsf{leg}_v(w) = w$. Otherwise,
\begin{enumerate}
\item Draw a vertical bar immediately to the right of $w_{c_i}$ if either $w_{c_i + 1} < v$, or if $i = k$ and $c_k = m$. 

\item Within each newly created block which contains at least one of the $w_{c_i}$'s, move the rightmost entry which is $< v$ to the end of the block, immediately left of the (rightmost) vertical bar. 
\end{enumerate}
Define the resulting word to be $\mathsf{leg}_v(w)$. This map is well-defined on $\Lcal_v$, since the first letter in $w \in \Lcal_v$ is required to be $< v$; that is, we will always have an entry $< v$ to shuffle within each block.
\end{definition}

\begin{example}
For $w = 48731926 \in \Lcal_5$, draw vertical bars to the right of $7, 9$, and $6$, and within each block shuffle the rightmost number $< 5$ to the end:
\[
\begin{tikzpicture}
\draw (0.5,0) node (C1) {$4$};
\draw (1,0) node[darkred] {$8$};
\draw (1.5,0) node[darkred] {$7$};
\draw[thick] (1.7,-0.3) -- (1.7,0.3);
\draw (1.7,-0.2) node (B1) {};
\draw (2,0) node {$3$};
\draw (2.5,0) node (C2) {$1$};
\draw (3,0) node[darkred] {$9$};
\draw[thick] (3.2,-0.3) -- (3.2,0.3);
\draw (3.2,-0.2) node (B2) {};
\draw (3.5,0) node (C3) {$2$};
\draw (4,0) node[darkred] {$6$};
\draw[thick] (4.2,-0.3) -- (4.2,0.3);
\draw (4.2,-0.2) node (B3) {};
\draw [->,darkgreen] (C1.south) to [out=305,in=235] (B1.south west);
\draw [->,darkgreen] (C2.south) to [out=305,in=235] (B2.south west);
\draw [->,darkgreen] (C3.south) to [out=305,in=235] (B3.south west);

\draw (5.25,0) node {$\longmapsto$};

\draw (6 + 0.5,0) node {$8$};
\draw (6 + 1,0) node {$7$};
\draw (6 + 1.5,0) node {$4$};
\draw[thick] (6 + 1.7,-0.3) -- (6 + 1.7,0.3);
\draw (6 + 2,0) node {$3$};
\draw (6 + 2.5,0) node {$9$};
\draw (6 + 3,0) node {$1$};
\draw[thick] (6 + 3.2,-0.3) -- (6 + 3.2,0.3);
\draw (6 + 3.5,0) node {$6$};
\draw (6 + 4,0) node {$2$};
\draw[thick] (6 + 4.2,-0.3) -- (6 + 4.2,0.3);
\end{tikzpicture}
\]
So, $\mathsf{leg}_5(48731926) = 87439162$.
\end{example}
Note the similarity of $\mathsf{arm}_u$ and $\mathsf{leg}_v$ to the maps $\gamma_x$ introduced by Foata (\cite{Foa}), which he used to bijectively prove that the major index and inversion number have the same distribution over $\Sym_n$.
\begin{definition}
Using the maps $\mathsf{arm}_u$ and $\mathsf{leg}_v$ defined above, we define $\theta : SF_<(\mu) \to SF_<(\nu)$ as follows. Given $S \in SF_<(\mu)$, let $u = S_{1,1}$, and let $v$ be the leftmost entry in the first row of $S$ which is $> u$. Then define
\[
\theta(S) = \mathsf{leg}_v \circ \mathsf{arm}_u \circ \mathsf{bump}(S),
\]
where $\mathsf{arm}_u$ acts only on the first row of $\mathsf{bump}(S)$ --- that is, we replace the first row of $\mathsf{bump}(S)$ with $\mathsf{arm}_u(S_{1,2} S_{1,3} \cdots S_{1,a})$ --- and $\mathsf{leg}_v$ acts on the first column of $\mathsf{arm}_u(\mathsf{bump}(S))$, strictly above the first row, so that these entries are replaced by $\mathsf{leg}_v(S_{1,1} S_{2,1} \cdots S_{\ell,1})$, entered from bottom to top.
\end{definition}

Since $S_{1,a} > u$ and $u < v$ for any $S \in SF_<(\mu)$, each step in the composition is defined; thus $\theta$ is well-defined. Furthermore we have $\theta(S)_{\ell + 1,1} < v = \theta(S)_{1,1}$ by construction, so that $\theta(S) \in SF_<(\rho)$.

\begin{example}
Let $\mu = (5,1^4)$ and $\rho = (4,1^5)$, and let
\[
S =
\ytableausetup{centertableaux, boxsize = 1.2em}
\begin{ytableau}
9 \\
1 \\
7 \\
4 \\
5 & 6 & 3 & 2 & 8
\end{ytableau} \in SF_<(\mu).
\]
Then in the language of the above definition, we have $u = 5, v = 6$, so that
\begin{align*}
\theta(S) &= \mathsf{leg}_6 \circ \mathsf{arm}_5 \circ \mathsf{bump}(S) = \mathsf{leg}_6 \circ \mathsf{arm}_5\left(
\begin{ytableau}
9 \\
1 \\
7 \\
4 \\
5  \\
6 & {\color{darkred} 3} & {\color{darkred} 2} & {\color{darkgreen} 8}
\end{ytableau}
\right) \\
&= \mathsf{leg}_6\left(
\begin{ytableau}
{\color{darkred} 9} \\
{\color{darkgreen} 1} \\
{\color{darkred} 7} \\
{\color{darkgreen} 4} \\
5  \\
6 & {\color{darkgreen} 8} & 3 & 2
\end{ytableau}
\right) = 
\begin{ytableau}
{\color{darkgreen} 1} \\
9 \\
{\color{darkgreen} 4} \\
7 \\
5  \\
6 & 8 & 3 & 2
\end{ytableau} \in SF_<(\rho).
\end{align*}
Note that $\varphi_S = \varphi_{\theta(S)} = x_7^2x_9^4y_2^3y_3^2$.
\end{example}

\begin{theorem}\label{l-terms}
For partitions $\mu = (a,1^{\ell - 1})$ and $\rho = (a-1, 1^\ell)$, the map $\theta : SF_<(\mu) \to SF_<(\rho)$ defined above satisfies $\varphi_S = \varphi_{\theta(S)}$ for every $S \in SF_<(\mu)$.
\end{theorem}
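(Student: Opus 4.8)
### Proof Strategy

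The plan is to track how each of the three maps ($\mathsf{bump}$, $\mathsf{arm}_u$, $\mathsf{leg}_v$) affects $\mathsf{rowInv}$ and $\mathsf{colInv}$, and hence the monomial $\varphi$. Recall that $\varphi_S$ records, for each cell $c$ in the first row, the number of entries left of and larger than $c$ (contributing a power of $y_{c}$), and for each cell $d$ in the first column, the number of entries above and smaller than $d$ (contributing a power of $x_d$). The key reformulation I would use is: the exponent of $y_r$ in $\varphi_S$ equals the number of ``descent-to-the-left'' partners of $r$ among entries to its left in row $1$, and the exponent of $x_d$ equals the number of entries above $d$ in column $1$ that are smaller than $d$. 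So it suffices to show that $\theta$ preserves, for every label $r$, the count of larger entries to its left in the first row, and for every label $d$, the count of smaller entries above it in the first column.

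First I would handle $\mathsf{bump}$ followed by $\mathsf{arm}_u$ acting on the first row. After $\mathsf{bump}$, the entry $u = S_{1,1}$ has left the first row entirely. The entries it used to sit to the left of were exactly the $r < u$ in positions $S_{1,2},\dots,S_{1,a}$ (these are the $w_{b_i}$ of the $\mathsf{arm}_u$ definition); for each such $r$, removing $u$ decreases the count of larger-entries-to-the-left by exactly one. The claim is that $\mathsf{arm}_u$ restores precisely these counts and disturbs nothing else. This is where the block structure does the work: the vertical bars are drawn so that each $w_{b_i}$ is grouped with the run of entries preceding it, and within each block containing some $w_{b_i}$, moving the leftmost entry $> u$ (call it $v'$) to the front increases, for every $w_{b_i}$ in that block, the count of larger-entries-to-the-left by exactly one (since $v' > u > w_{b_i}$ and $v'$ was not already left of $w_{b_i}$), while the relative order of $v'$ and any entry $> u$ it passes is a transposition between two entries both $> u$ — and here I'd need to check that such a swap does not change the row-inversion contribution to $\varphi$; but it does change it, so the honest statement is subtler: I should verify that the $y$-exponents are only recorded for the entry $r$ that is the \emph{smaller} member of an inversion pair, and re-examine whether swapping two large entries alters any $y_r$ with $r$ small. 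Since all entries $>u$ in a block sit to the right of all the small $w_{b_i}$'s after the shuffle, and the shuffle only permutes the $>u$ entries among themselves within the block, no small entry changes its set of larger-left-neighbors beyond the intended $+1$; so the net effect on $\varphi$'s $y$-part is exactly to multiply by $\prod y_r$ over the $r<u$ in the first row, which is precisely the factor that $\mathsf{bump}$ deleted (the inversions $(u, r)$). I would write this out as a lemma about $\mathsf{arm}_u$ in the abstract word setting: $\mathsf{rowInv}$-counts-with-small-members are all preserved, and each small entry gains exactly one large-left-partner.

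Next, the symmetric argument for $\mathsf{leg}_v$ on the first column. After the first two steps the new top-of-column entry is $v$ (the leftmost first-row entry of $S$ exceeding $u$), which was not previously in the first column, so column inversions $(v, c)$ may have appeared for every $c < v$ among $S_{1,1}, S_{2,1}, \dots, S_{\ell,1}$ — wait, more carefully: $\mathsf{bump}$ moved $v_1 = S_{1,1}, \dots, v_\ell = S_{\ell,1}$ up, and the cell that is now row $1$, column $1$ holds $v$; the entries above it are $v_1,\dots,v_\ell$ read bottom-to-top. A column inversion is a pair (upper entry) $>$ (lower entry), contributing $x_{\text{upper}}$. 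Placing $v$ at the bottom of the column creates an inversion $(v_i, v)$ wait no — $v$ is at the bottom (row $1$), the $v_i$'s are above, so the new inversions are $(v_i, v)$ for each $v_i > v$, contributing extra powers of $x_{v_i}$; and we also may have created $(v, \cdot)$? No, $v$ is at the bottom, nothing is below it. So the spurious new factors are $\prod_{v_i > v} x_{v_i}$, i.e. the $x$-exponent of each $v_i > v$ went up by one. The map $\mathsf{leg}_v$ (mirror image of $\mathsf{arm}_u$, acting on the word $w_1 \cdots w_m = S_{1,1} S_{2,1} \cdots S_{\ell,1}$ read left-to-right as bottom-to-top with the understanding that ``left'' now means ``lower'') should remove exactly these: within each block containing an entry $> v$, moving the rightmost entry $< v$ to the end cancels one inversion of the form $(w_{c_i}, \cdot)$ for each $w_{c_i} > v$ in the block. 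I would state and prove the mirror lemma for $\mathsf{leg}_v$.

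The main obstacle, and where I'd spend the most care, is the bookkeeping in the block-shuffle steps: checking that moving the shuffled entry past other entries of the \emph{same side of $u$ (resp.\ $v$)} produces no net change in $\varphi$, and that the bar-placement rules exactly partition the word so that \emph{every} small entry (resp.\ large entry) lies in a block containing a witness to shuffle and gains/loses exactly one inversion — no more, no less. In particular I must confirm the edge cases: consecutive small entries $w_{b_i}, w_{b_{i+1}}$ in the same block (only one bar is drawn, to the left of the first), a small entry at the very start of the word, and the interaction at block boundaries where an entry $>u$ ends one block and is followed by a small entry starting the next. A clean way to organize this is to prove, by induction on block structure, that $\mathsf{arm}_u$ and $\mathsf{leg}_v$ are bijections onto their images with explicit inverses (undo the shuffle within each block), which simultaneously gives well-definedness of $\theta$ as a bijection $SF_<(\mu) \to SF_<(\rho)$ — though that bijectivity claim is really needed for the next theorem, not this one. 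For Theorem \ref{l-terms} itself, the three displayed equalities $\varphi_{\mathsf{bump}(S)} = \varphi_S / (\text{deleted }y\text{'s}) \cdot (\text{nothing})$, then $\varphi_{\mathsf{arm}_u(\cdots)} = \varphi_{\mathsf{bump}(S)} \cdot (\text{restored }y\text{'s}) = \varphi_S \cdot (\text{new }x\text{'s from }v)$, then $\varphi_{\mathsf{leg}_v(\cdots)} = \varphi_S$ chain together to the result, and the whole proof reduces to the two word-lemmas plus the elementary computation of what $\mathsf{bump}$ does.
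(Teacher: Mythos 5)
Your overall strategy is the same as the paper's: track how $\mathsf{bump}$, $\mathsf{arm}_u$, and $\mathsf{leg}_v$ transform $\mathsf{rowInv}$ and $\mathsf{colInv}$, show that $\mathsf{arm}_u$ exactly restores the row inversions that $\mathsf{bump}$ deleted (one per small entry $r_i < u$ in row~$1$), and show that $\mathsf{leg}_v$ exactly cancels the column inversions that the new corner entry $v$ introduced. Your reformulation in terms of per-variable exponents (exponent of $y_r$ = number of larger entries left of $r$; exponent of $x_d$ = number of smaller entries below $d$) is a slightly cleaner way to say what the paper phrases as a set equality and is directly the right invariant to track.

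There is one real slip in the middle of your analysis of $\mathsf{arm}_u$, though it does not derail the final conclusion. You worry that the shuffled entry $v'$ passes other entries $> u$ and then assert both that ``the shuffle only permutes the $> u$ entries among themselves within the block'' and that ``all entries $> u$ in a block sit to the right of all the small $w_{b_i}$'s after the shuffle.'' Both claims are false: in the block $3\,1\,8\,7$ (with $u = 5$) the shuffle gives $8\,3\,1\,7$, where $7 > u$ sits to the right of $3$ and $1$, and nothing like a permutation of the large entries happened. The correct and much simpler observation — which is exactly what the paper uses — is that $v'$ is by construction the \emph{leftmost} entry $>u$ in its block, so it passes only the initial run of entries $< u$, hence creates precisely the inversions $(v', s_i)$ with the small entries $s_i$ and disturbs nothing else. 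Your concern about same-side swaps never arises. The mirror analysis of $\mathsf{leg}_v$ (rightmost small entry moving to the end passes only large entries) should likewise be stated without that detour. With that repair the proof closes exactly as you sketch.
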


\begin{proof}
We begin by comparing the first rows of $S$ and $\theta(S)$ to demonstrate that
\begin{equation}\label{eq:row-invs}
\{ r : (t,r) \in \mathsf{rowInv}(S) \text{ for some } t \} = \{ r' : (t',r') \in \mathsf{rowInv}(\theta(S)) \text{ for some } t' \}.
\end{equation}
Since the smaller entries of the row inversions in $S$ determine the $y$-terms in $\varphi_S$ (resp. $\theta(S)$), it will follow that $\varphi_S$ and $\varphi_{\theta(S)}$ have identical $y$-terms.

First, each row inversion in $\mathsf{bump}(S)$ is also a row inversion in $S$, but $S$ may have additional row inversions created by $u = S_{1,1}$ which are absent in $\mathsf{bump}(S)$. Let $r_1, r_2, \ldots, r_p$ denote the elements in the first row of $S$ which are $< u$, so that
\[
\mathsf{rowInv}(S) = \mathsf{rowInv}(\mathsf{bump}(S)) \cup \{ (u, r_1), (u, r_2), \ldots, (u, r_p) \}.
\]
We claim that performing $\mathsf{arm}_u$ on $\mathsf{bump}(S)$ introduces exactly one new row inversion involving each of $r_1, r_2, \ldots, r_p$, and does not create any other new row inversions, nor alter any existing ones.

Indeed, in performing the ``blocking" procedure of $\mathsf{arm}_u$, each of the $r_i$'s will be contained in a unique block; in each of these blocks we move some number $z > u > r_i$ to the left of each $r_i$ contained in the block, so that exactly one new row inversion is created with each of $r_1, r_2, \ldots, r_p$ in $\mathsf{arm}_u(\mathsf{bump}((S)))$. Furthermore, since $z$ is chosen to be the leftmost entry in each block which is $> u$, 
\begin{enumerate}
\item[(i)] the only new row inversions created within a block are those created with some $r_i$, and
\item[(ii)] no existing row inversions are removed, since $z$ does not move to the left of a number larger than itself.
\end{enumerate}
We only shuffle numbers within blocks which contain at least one of the $r_i$'s, and performing $\mathsf{leg}_v$ on the resulting filling does not affect the first row, so it follows that \eqref{eq:row-invs} holds.

Now we compare the first columns of $S$ and $\theta(S)$ to demonstrate that
\begin{equation}\label{eq:col-invs}
\{ d : (d,c) \in \mathsf{colInv}(S) \text{ for some } c \} = \{ d' : (d',c') \in \mathsf{colInv}(\theta(S)) \text{ for some } c' \}.
\end{equation}
Since the larger entries of the column inversions in $S$ determine the $x$-terms in $\varphi_S$ (resp. $\theta(S)$), it will follow that $\varphi_S$ and $\varphi_{\theta(S)}$ have identical $x$-terms.

First, note that $v = \mathsf{arm}_u(\mathsf{bump}(S))_{1,1}$ by construction. Then every column inversion in $S$ is also a column inversion in $\mathsf{arm}_u(\mathsf{bump}(S))$, except that $\mathsf{arm}_u(\mathsf{bump}(S))$ may have additional column inversions created by $v$ which are not present in $S$. Let $d_1, d_2, \ldots, d_q$ denote the elements in the first column of $\mathsf{arm}_u(\mathsf{bump}(S))$ which are $> v$, so that
\[
\mathsf{colInv}(S) = \mathsf{colInv}(\mathsf{arm}_u(\mathsf{bump}(S))) - \{ (d_1, v), (d_2, v), \ldots, (d_q, v) \}.
\]
We claim that performing $\mathsf{leg}_v$ on $\mathsf{arm}_u(\mathsf{bump}(S))$ removes exactly one column inversion involving each of $d_1, d_2, \ldots, d_q$. When we perform the ``blocking" procedure required by $\mathsf{leg}_v$ in the first column of $\mathsf{arm}_u(\mathsf{bump}(S))$, each of the $d_j$'s is contained in a unique block, and within each block some number $t < v < d_j$ is shuffled above each $d_j$ in the block, so that exactly one column inversion involving each of $d_1, d_2, \ldots, d_q$ is removed. Furthermore, since $t$ is chosen to be the the topmost number $< v$ in its block,
\begin{enumerate}
\item[(i)] the only column inversions removed within a block are those involving some $d_j$, and
\item[(ii)] no new column inversions are created, since $t$ does not move above any number smaller than itself.
\end{enumerate}
Thus \eqref{eq:col-invs} holds, and we conclude that $\varphi_S = \varphi_{\theta(S)}$.
\end{proof}

\begin{cor}
The map $\theta : SF_<(\mu) \to SF_<(\rho)$ is a bijection.
\end{cor}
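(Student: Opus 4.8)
The plan is to show that $\theta$ is injective, since the equality $\#SF_<(\mu) = \#SF_<(\rho) = \frac{n!}{2}$ then forces bijectivity. To prove injectivity I would construct an explicit inverse $\theta^{-1} : SF_<(\rho) \to SF_<(\mu)$ by reversing the three steps of $\theta$ in order. The key observation is that each of $\mathsf{bump}$, $\mathsf{arm}_u$, and $\mathsf{leg}_v$ admits an inverse, but the subtlety is that the parameters $u$ and $v$ must be recoverable from the \emph{output} $T = \theta(S)$, not just the input. I would argue as follows: given $T \in SF_<(\rho)$, the entry $v = T_{1,1}$ is visible, and by the construction of $\theta$ (and the analysis in the proof of Theorem \ref{l-terms}, where $v = \mathsf{arm}_u(\mathsf{bump}(S))_{1,1}$ is the leftmost entry $>u$ in the first row of $S$) the value $u$ is the leftmost entry strictly smaller than $v$ among $\{v = T_{1,1}, T_{1,2}, \ldots, T_{1,a-1}\}$ reading left to right — equivalently $u = S_{1,1}$ is the unique entry whose removal from the first row of $S$ distinguishes $\mu$ from $\rho$. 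One must check this characterization is correct, i.e. that $\mathsf{arm}_u$ followed by $\mathsf{leg}_v$ does not disturb the relative position of $v$ as the first ``left-to-right minimum-breaking'' entry; since $\mathsf{leg}_v$ leaves the first row untouched and $\mathsf{arm}_u$ only shuffles $z>u$ entries to the fronts of blocks, $v$ remains the leftmost entry exceeding $u$, and $u$ remains identifiable.

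Once $u$ and $v$ are recovered from $T$, I would invert step by step. First invert $\mathsf{leg}_v$: this requires showing $\mathsf{leg}_v : \Lcal_v \to \Wcal$ is injective, with image consisting of words whose first letter exceeds $v$ except in trivial blocks; the inverse re-locates, within each block, the shuffled entry $< v$ back to its original slot. The cleanest route is to observe (as the paper already hints) that $\mathsf{leg}_v$ is a Foata-type map: within each block the ``descent structure'' relative to the threshold $v$ determines a unique preimage, exactly as in Foata's bijection between major index and inversion number. Then invert $\mathsf{arm}_u$ similarly — it is the mirror image of $\mathsf{leg}_v$ (reverse the word, swap the roles of $>u$ and $<u$), so its injectivity and the description of its image follow formally. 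Finally invert $\mathsf{bump}$, which is trivially invertible: given the first row $u_1 \cdots u_{a-1}$ and first column $v_1 \cdots v_{\ell+1}$ of a filling of $\rho$, reinsert $v_1 = u$ at the start of the first row and shift the column down. One must confirm that the intermediate objects land in the correct domains ($\Acal_u$, $\Lcal_v$) at each stage so the composition is legal — e.g. that after un-$\mathsf{leg}$-ing, the first column string has its top entry (the one that becomes $S_{1,1}=u$) in the right place, and that the reconstructed first row lies in $\Acal_u$ because its last letter is $S_{1,a} > u$, which is guaranteed since $S \in SF_<(\mu)$.

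The main obstacle I anticipate is the bookkeeping needed to prove that $\mathsf{arm}_u$ and $\mathsf{leg}_v$ are injective on their stated domains and to pin down their images precisely — the blocking procedure creates blocks whose boundaries depend on the word, so showing that the boundaries can be reconstructed from the output (together with the threshold) is the crux. A clean way to sidestep a lengthy direct argument is to exhibit the explicit two-sided inverse of $\mathsf{leg}_v$ on its image: in any block of $\mathsf{leg}_v(w)$, the unique entry $<v$ sitting immediately before the right bar was moved there from the position right after the last entry $>v$ that \emph{precedes some} entry $<v$ in the original — more simply, one reinserts that small entry just after the leftmost maximal run of entries $>v$ within the block. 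Verifying this is an involution-up-to-the-threshold reduces, block by block, to Foata's classical lemma, which I would cite rather than reprove.

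Alternatively — and this may be the shortest path — I would not build a global inverse at all, but argue injectivity directly: if $\theta(S) = \theta(S')$ then $S, S'$ have the same $u$ and $v$ (recovered as above), hence the same $\mathsf{bump}(S)$-first-row up to the action of $\mathsf{arm}_u$ and the same first column up to $\mathsf{leg}_v$; injectivity of each Foata-type map on its domain then yields $S = S'$. Combined with the cardinality count $\#SF_<(\mu) = \#SF_<(\rho) = \frac{n!}{2}$ (immediate, since exactly half of the $n!$ standard fillings of a hook satisfy the relevant inequality between the two relevant entries), this gives the corollary, and then $\{\varphi_S : S \in SF_<(\mu)\} = \{\varphi_T : T \in SF_<(\rho)\}$ together with Theorem \ref{l-terms} completes the proof of Theorem \ref{nfact/2}.
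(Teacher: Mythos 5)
Your proposal takes a genuinely different — and much more laborious — route than the paper, and it contains a concrete error.

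\textbf{What the paper actually does.} The paper's proof is essentially one line, and it leans on the result just proved: by Theorem~\ref{l-terms}, $\varphi_S = \varphi_{\theta(S)}$ for every $S$. So if $\theta(S) = \theta(T)$, then trivially $\varphi_{\theta(S)} = \varphi_{\theta(T)}$, whence $\varphi_S = \varphi_T$. Since $\{\varphi_S : S \in SF(\mu)\}$ is a basis of the $n!$-dimensional space $\Hcal_\mu$ and there are $n!$ fillings, the monomials $\varphi_S$ are pairwise distinct, so $S \mapsto \varphi_S$ is injective on all of $SF(\mu)$, and $S=T$. No inverse map, no reconstruction of $u$ and $v$, no analysis of the block structure of $\mathsf{arm}_u$ and $\mathsf{leg}_v$ is needed at this stage. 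Your approach discards the main deliverable of Theorem~\ref{l-terms} and tries to re-derive injectivity by hand; even if it worked, it would be far longer than what is required.

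\textbf{A concrete gap in the proposal.} Your characterization of how to recover $u$ from the output $T = \theta(S)$ is incorrect. You claim $u$ is ``the leftmost entry strictly smaller than $v$ among $\{T_{1,1}, T_{1,2}, \ldots, T_{1,a-1}\}$,'' i.e.\ something visible in the \emph{first row} of $T$. But $u = S_{1,1}$ is moved by $\mathsf{bump}$ into the first column and stays there: after $\theta$, $u$ sits somewhere in $T_{2,1}, \ldots, T_{\ell+1,1}$, not in the first row at all. The paper's own worked example makes this explicit: there $S$ has first row $5,6,3,2,8$ with $u=5$, $v=6$, and $\theta(S)$ has first row $6,8,3,2$ — whose leftmost entry $<6$ is $3$, not $5$; indeed $5$ appears as $\theta(S)_{2,1}$. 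Consequently, the order you propose — recover $u$ and $v$ from the first row, then invert $\mathsf{leg}_v$, then $\mathsf{arm}_u$ — cannot work as stated. You would need to first invert $\mathsf{leg}_v$ on the first column (which is legitimate, since $v = T_{1,1}$ is visible), read off $u$ as the first letter of that preimage, and only then invert $\mathsf{arm}_u$ on the first row. Even with that fix, you would still owe a careful proof that $\mathsf{leg}_v$ and $\mathsf{arm}_u$ are injective with explicitly describable images — the appeal to ``Foata's classical lemma'' is plausible but is asserted rather than verified, and these maps are close to but not literally Foata's $\gamma_x$.

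\textbf{Summary.} The paper does not build an inverse at all; it deduces injectivity in one stroke from $\varphi_S = \varphi_{\theta(S)}$ together with injectivity of $S \mapsto \varphi_S$. Your plan is salvageable in principle but mislocates $u$, omits the hard verifications about $\mathsf{arm}_u$ and $\mathsf{leg}_v$, and — most importantly — misses the intended shortcut.
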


\begin{proof}
First, we claim that $\theta$ is injective. Indeed, if $\theta(S) = \theta(T)$ for $S, T \in SF_<(\mu)$, then $\varphi_{\theta(S)} = \varphi_{\theta(T)}$. But then by Theorem \ref{l-terms}, we obtain $\varphi_S = \varphi_T$, i.e. $S = T$. Thus $\theta$ is injective, and since $\# SF_<(\mu) = \# SF_<(\rho) = \frac{n!}{2}$, the result follows.
\end{proof}

\begin{cor}
We have $\dim (\Hcal_\mu \cap \Hcal_\rho) \ge \frac{n!}{2}$.
\end{cor}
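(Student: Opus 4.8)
The plan is to produce, directly, a $\tfrac{n!}{2}$-dimensional subspace of $\QQ[X,Y]$ contained in both $\Hcal_\mu$ and $\Hcal_\rho$; given Theorem~\ref{l-terms} and the corollary that $\theta$ is a bijection, this is now essentially a bookkeeping matter. Throughout I would use the identification of each Garsia-Haiman module with the span of its $\{\varphi_S\}$-basis inside $\QQ[X,Y]$, so that $\Hcal_\mu = \operatorname{span}\{\varphi_S : S\in SF(\mu)\}$, $\Hcal_\rho = \operatorname{span}\{\varphi_T : T\in SF(\rho)\}$, and $\Hcal_\mu\cap\Hcal_\rho$ is the honest intersection of these two subspaces of $\QQ[X,Y]$.

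The first step is to observe that $U := \operatorname{span}\{\varphi_S : S\in SF_<(\mu)\}$ has dimension exactly $\#SF_<(\mu) = \tfrac{n!}{2}$. Linear independence is immediate: the $\varphi_S$ ranging over all of $SF(\mu)$ are pairwise distinct monomials (they form a monomial basis of $\Hcal_\mu$), so any subcollection of them is linearly independent in $\QQ[X,Y]$; and $\#SF_<(\mu) = \tfrac{n!}{2}$ because, as already noted, swapping the entries of the two cells $(1,1)$ and $(1,a)$ — which are distinct since $a\ge 2$ — is a fixed-point-free involution on $SF(\mu)$ interchanging $SF_<(\mu)$ with its complement. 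Trivially $U\subseteq\Hcal_\mu$.

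The second step is to show $U\subseteq\Hcal_\rho$ as well. By Theorem~\ref{l-terms}, for every $S\in SF_<(\mu)$ we have $\varphi_S=\varphi_{\theta(S)}$ with $\theta(S)\in SF_<(\rho)\subseteq SF(\rho)$; hence each generator $\varphi_S$ of $U$ is itself one of the basis elements of $\Hcal_\rho$, so $U\subseteq\Hcal_\rho$. (Using moreover that $\theta$ is a bijection, one gets the sharper statement $U=\operatorname{span}\{\varphi_T : T\in SF_<(\rho)\}$.) Combining the two steps, $U\subseteq\Hcal_\mu\cap\Hcal_\rho$, and therefore $\dim(\Hcal_\mu\cap\Hcal_\rho)\ge\dim U=\tfrac{n!}{2}$.

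I do not expect a genuine obstacle here: all the difficulty has already been absorbed into the construction of $\theta$ and the verification in Theorem~\ref{l-terms} that it leaves $\varphi_S$ unchanged, so this corollary merely reorganizes information already in hand. The one point that needs care is the one flagged above — namely that both Garsia-Haiman modules are being viewed as concrete subspaces of a fixed copy of $\QQ[X,Y]$ through their monomial bases, so that ``intersection'' literally means intersection of subspaces and each $\varphi_S$ with $S\in SF_<(\mu)$ genuinely belongs to both. (The matching bound $\dim(\Hcal_\mu\cap\Hcal_\rho)\le\tfrac{n!}{2}$ needed to finish Theorem~\ref{nfact/2} is the substantive remaining work — it should come from showing that the two bases jointly span a subspace of dimension at least $\tfrac{3n!}{2}$, i.e.\ that the $\tfrac{n!}{2}$ standard fillings outside $SF_<$ contribute linearly independent new elements — but that lies beyond this corollary.)
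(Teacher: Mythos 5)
Your proposal is correct and follows essentially the same route as the paper: the paper's proof likewise observes that each $\varphi_S$ with $S\in SF_<(\mu)$ lies in both $\Hcal_\mu$ and (via $\varphi_S=\varphi_{\theta(S)}$ from Theorem~\ref{l-terms}) $\Hcal_\rho$, and that these $\frac{n!}{2}$ monomials are linearly independent. You merely spell out the counting and the identification of the modules with subspaces of $\QQ[X,Y]$ more explicitly than the paper does, and you correctly flag that the bijectivity of $\theta$ is not actually needed for this direction.
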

\begin{proof}
We have exhibited that $\varphi_S \in \Hcal_\mu \cap \Hcal_\rho$ for any $S \in SF_<(\mu)$ (equivalently $SF_<(\rho)$), and $\{ \varphi_S : S \in SF_<(\mu) \}$ is linearly independent, so it follows that
\[
\dim (\Hcal_\mu \cap \Hcal_\rho) \ge \# \{ \varphi_S : S \in SF_<(\mu) \} = \frac{n!}{2}.
\]
\end{proof}

In fact, we claim that $\{ \varphi_S : S \in SF_<(\mu) \}$ (or equivalently $\{ \varphi_T : T \in SF_<(\rho) \}$) forms a basis for $\Hcal_\mu \cap \Hcal_\rho$, aided by the following lemma.

\begin{lem}\label{lem-1}
For partitions $\mu = (a,1^{\ell-1}), \rho = (a-1,1^\ell)$, let $SF_>(\mu) = SF(\mu) - SF_<(\mu)$ and $SF_>(\rho) = SF(\rho) - SF_<(\rho)$. Then
\begin{enumerate}
\item for any $S \in SF_>(\mu)$, we have $\varphi_S \neq \varphi_T$ for all $T \in SF(\rho)$, and
\item for any $T \in SF_>(\rho)$, we have $\varphi_T \neq \varphi_S$ for all $S \in SF(\mu)$.
\end{enumerate}
\end{lem}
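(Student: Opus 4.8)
The plan is to prove the two parts directly, by reading off from $\varphi_S$ an invariant that remembers the length of the first row (resp.\ first column) of the filling producing it. For a word $w=w_1w_2\cdots w_m$ of distinct positive integers set
\[
\Phi(w)=\prod_{j=1}^{m}y_{w_j}^{\#\{\,i<j\,:\,w_i>w_j\,\}},\qquad
\Psi(w)=\prod_{j=1}^{m}x_{w_j}^{\#\{\,i<j\,:\,w_i<w_j\,\}}.
\]
Unwinding the definitions, if $\varphi_S=\varphi_S^{X}\varphi_S^{Y}$ is the factorization of $\varphi_S$ into its $x$- and $y$-parts, then for $S\in SF(a,1^{\ell-1})$ we have $\varphi_S^{Y}=\Phi(S_{1,1}S_{1,2}\cdots S_{1,a})$ and $\varphi_S^{X}=\Psi(S_{1,1}S_{2,1}\cdots S_{\ell,1})$, and similarly for fillings of $\rho$. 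The key step will be the following claim and its transpose: \emph{if $w_1>w_m$ then $\Phi(w)\neq\Phi(w')$ for every word $w'$ of distinct integers with $|w'|<m$; dually, if $w_m>w_1$ then $\Psi(w)\neq\Psi(w')$ for every such $w'$ with $|w'|<m$.}

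To prove the claim for $\Phi$, write $M=\Phi(w)$. Since $w_1>w_m$, the last letter $w_m$ is not a left-to-right maximum of $w$, so it has positive exponent in $M$; let $i$ be its rank among $\operatorname{supp}(M)$ listed increasingly. The left-to-right maxima of $w$ are exactly the letters of exponent $0$ in $M$, i.e.\ the letters outside $\operatorname{supp}(M)$, and each of them is $\geq w_1>w_m$; hence all $m-|\operatorname{supp}(M)|$ of them exceed $w_m$, and together with the $|\operatorname{supp}(M)|-i$ support letters larger than $w_m$ this shows that exactly $m-i$ letters of $w$ exceed $w_m$. As $w_m$ is the last letter, its exponent in $M$ counts all of them, so it equals $m-i$. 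Now suppose $\Phi(w')=M$. Every element of $\operatorname{supp}(M)$ occurs in $w'$; in particular $w'$ contains $w_m$ together with the $i-1$ support values below it, so $w'$ has at least $i$ letters $\leq w_m$, hence at most $|w'|-i$ letters exceeding $w_m$. Since the exponent of $y_{w_m}$ in $M$ is the number of letters of $w'$ that precede $w_m$ and exceed it, it is at most $|w'|-i$; comparing with the value $m-i$ computed above forces $|w'|\geq m$. The statement for $\Psi$ is the same argument with ``left-to-right minimum'' in place of ``left-to-right maximum'' and $w_m$ (now the larger of $w_1,w_m$) still as the distinguished letter, or it follows by transposing fillings and swapping $X\leftrightarrow Y$.

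Granting the claim, both parts follow at once. For (1): if $S\in SF_>(\mu)$ then $S_{1,1}>S_{1,a}$, so the length-$a$ word $S_{1,1}\cdots S_{1,a}$ has first letter larger than last, and the claim says $\varphi_S^{Y}$ is not $\Phi(w')$ for any $w'$ of length $<a$; but for every $T\in SF(\rho)$ the first row of $T$ has length $a-1$, so $\varphi_T^{Y}$ is such a $\Phi(w')$, whence $\varphi_S^{Y}\neq\varphi_T^{Y}$ and in particular $\varphi_S\neq\varphi_T$. For (2): if $T\in SF_>(\rho)$ then $T_{\ell+1,1}>T_{1,1}$, so the length-$(\ell+1)$ column word $T_{1,1}T_{2,1}\cdots T_{\ell+1,1}$ has last letter larger than first, and the transposed claim says $\varphi_T^{X}$ is not $\Psi(c')$ for any $c'$ of length $<\ell+1$; but the first column of every $S\in SF(\mu)$ has length $\ell$, so $\varphi_S^{X}$ is such a $\Psi(c')$, giving $\varphi_T^{X}\neq\varphi_S^{X}$ and hence $\varphi_T\neq\varphi_S$.

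The one place that takes thought is the claim — specifically the identity that the distinguished letter's exponent in $M$ equals $m-i$, i.e.\ that its exponent plus its support-rank equals the word length. This refinement is what makes the argument work: coarser invariants cannot tell whether a given $y$-monomial can be realized by a row of length only $a-1$. For instance the multiset of $y$-exponents of $\varphi_S$, sorted into a partition, can fit inside the staircase $(a-2,a-3,\dots,1,0)$ available to a length-$(a-1)$ row even when $S\in SF_>(\mu)$ — e.g.\ the monomial $y_1y_2^{2}$ arises from the length-$4$ row $3142$ but has exponent partition $(2,1)\subseteq(2,1,0)$ — so the obstruction must be extracted from the support values together with their exponents, not from the exponent multiset alone. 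Everything else is routine unwinding of the definitions of $\varphi_S$, $SF_<$, and $SF_>$.
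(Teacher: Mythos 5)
Your proof is correct and takes essentially the same approach as the paper: both arguments extract from the $y$-part (resp.\ $x$-part) of $\varphi_S$ a lower bound on the length of the first row (resp.\ column) of any filling realizing it, by comparing the exponent of the last row entry $r=S_{1,a}$ (resp.\ top column entry) with the support values lying below it. Your identity ``exponent of $w_m$ plus its rank in $\operatorname{supp}(M)$ equals $m$'' is a cleaner packaging of the paper's count, which instead exhibits the explicit divisor $m_S(Y)=y_r^{a-k-1+j}y_{t_1}\cdots y_{t_k}$ and shows it forces at least $a$ distinct entries in the first row of any $T$ with $m_S(Y)\mid\varphi_T$.
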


\begin{proof}
For the first assertion, let $S \in SF_>(\mu)$. We then have that $S_{1,1} > S_{1,a}$. Let $r = S_{1,a}$, and let $t_1, t_2, \ldots, t_k$ denote the entries in row $1$ of $S$ which are $< S_{1,1}$, ordered in such a way that $t_1, t_2, \ldots, t_j$ are the entries which are also $> r$. Then $S_{1,1}$ forms a row inversion with each of $t_1, t_2, \ldots, t_k$, and $t_1, t_2, \ldots, t_j$ each form a row inversion with $r$. Since there are then $a-k-1$ remaining terms in row $1$ which are $\ge S_{1,1}$ (and thus form row inversions with $r$), the following term divides $\varphi_{S}$:
\begin{equation}\label{eq:bad-term}
m_S(Y) = y_r^{a-k-1+j} y_{t_1} y_{t_2} \cdots y_{t_k}.
\end{equation}
We now claim that $m_S(Y)$ does not divide $\varphi_T$ for any filling $T \in SF(\rho)$. If it did, then each of $r, t_1, t_2, \ldots, t_k$ must appear in the first row of $T$. Furthermore, since $t_1, t_2, \ldots, t_j$ are the only of these elements which could be the larger entry in a row inversion involving $r$, there must be at least $a-k-1$ other entries in row $1$ which are $> r$ in order for $\varphi_T$ to have a $y_r$-degree of $a-k-1+j$. There are thus at least $a$ distinct entries in the first row of $T$, a contradiction since $\rho = (a-1,1^\ell)$. Thus $\varphi_S \neq \varphi_T$ for all $T \in SF(\rho)$.

The second assertion follows from a completely analogous argument, where we instead show that for any $T \in SF_>(\rho)$, one can infer from $\varphi_T$ that there must be $\ell + 1$ distinct entries in the first column of $T$.
\end{proof}

\begin{theorem}\label{at-most}
For $\mu = (a, 1^{\ell - 1}), \rho = (a-1,1^\ell)$, we have $\dim(\Hcal_\mu \cap \Hcal_\rho) \le \frac{n!}{2}$.
\end{theorem}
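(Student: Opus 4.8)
The plan is to pass from the quotient-ring definition of the two modules to the concrete realizations furnished by the ARR basis, and then to observe that the intersection of two subspaces of $\QQ[X,Y]$ each spanned by monomials is again spanned by monomials, at which point the bound drops out of Lemma \ref{lem-1}.

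First I would set up the realizations. Since $\{\varphi_S : S \in SF(\mu)\}$ descends to a basis of $\QQ[X,Y]/\ii_\mu$, the subspace $V_\mu := \mathrm{span}_\QQ\{\varphi_S : S \in SF(\mu)\}$ is a vector-space complement of $\ii_\mu$ in $\QQ[X,Y]$, and the quotient map restricts to an isomorphism $V_\mu \xrightarrow{\sim} \Hcal_\mu$; likewise $V_\rho := \mathrm{span}_\QQ\{\varphi_T : T \in SF(\rho)\}$ realizes $\Hcal_\rho$ inside $\QQ[X,Y]$. These are the realizations with respect to which the intersection is taken --- indeed this is exactly what makes the earlier assertion ``$\varphi_S \in \Hcal_\mu \cap \Hcal_\rho$ for $S \in SF_<(\mu)$'' meaningful --- so it suffices to bound $\dim(V_\mu \cap V_\rho)$.

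Next I would record the elementary linear-algebra fact that for any two sets $M$ and $N$ of monomials in $\QQ[X,Y]$ one has $\mathrm{span}(M) \cap \mathrm{span}(N) = \mathrm{span}(M \cap N)$: the inclusion $\supseteq$ is immediate, and for $\subseteq$ a polynomial lying in both spans has, in the monomial basis of $\QQ[X,Y]$, support contained both in $M$ and in $N$, hence in $M \cap N$. Applying this with $M = \{\varphi_S : S \in SF(\mu)\}$ and $N = \{\varphi_T : T \in SF(\rho)\}$ --- each a set of pairwise distinct monomials, being a basis of the respective quotient --- yields $V_\mu \cap V_\rho = \mathrm{span}(M \cap N)$, so $\dim(\Hcal_\mu \cap \Hcal_\rho) = \#(M \cap N)$. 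Finally, part (1) of Lemma \ref{lem-1} states precisely that $\varphi_S \notin N$ whenever $S \in SF_>(\mu)$; therefore every monomial in $M \cap N$ is of the form $\varphi_S$ with $S \in SF_<(\mu)$, and since the $\varphi_S$ are pairwise distinct this gives $\#(M \cap N) \le \#\{\varphi_S : S \in SF_<(\mu)\} = \# SF_<(\mu) = \tfrac{n!}{2}$, as claimed. (Part (2) of the lemma gives the symmetric statement and could be used in its place, arguing with $N$ rather than $M$.)

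The genuine combinatorial content all lives in Lemma \ref{lem-1} (together with the bijection $\theta$ used for the matching lower bound); in the present argument the only step requiring care is the first one, namely being explicit that $\Hcal_\mu \cap \Hcal_\rho$ is computed inside $\QQ[X,Y]$ through the ARR realizations $V_\mu$ and $V_\rho$. Once that is granted, the monomial-support observation does all the work and the inequality is immediate, so I do not expect any serious obstacle beyond this bookkeeping.
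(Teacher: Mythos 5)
Your proposal is correct, and the argument is a genuinely cleaner packaging of the same underlying idea. The paper proceeds by contradiction: it assumes a nontrivial linear combination of $\{\varphi_S : S \in SF_>(\mu)\}$ lies in the intersection, isolates a single $\varphi_{\widetilde{S}}$, and uses Lemma~\ref{lem-1} together with a no-cancellation argument to force all coefficients to vanish; the resulting statement $(\Hcal_\mu \cap \Hcal_\rho) \cap \mathrm{span}\{\varphi_S : S \in SF_>(\mu)\} = 0$ then yields the dimension bound by the usual subspace-sum inequality inside the $n!$-dimensional space $\Hcal_\mu$. You instead observe at the outset that both ARR bases consist of monomials, so the intersection of the two realizations $V_\mu$ and $V_\rho$ is precisely $\mathrm{span}(M \cap N)$, and Lemma~\ref{lem-1} then says $M \cap N \subseteq \{\varphi_S : S \in SF_<(\mu)\}$, giving the count immediately. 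This shortcuts the contradiction and dimension bookkeeping entirely, and as a bonus identifies $\Hcal_\mu \cap \Hcal_\rho$ \emph{exactly} as $\mathrm{span}(M\cap N)$, rather than merely bounding its dimension. Both arguments put all the real combinatorial weight on Lemma~\ref{lem-1}; the difference is purely in the linear algebra wrapper, where yours is tighter. Your opening paragraph, making explicit that the intersection must be interpreted through the concrete realizations $V_\mu, V_\rho \subseteq \QQ[X,Y]$, is also a useful clarification the paper leaves implicit.
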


\begin{proof}
We demonstrate that no nontrivial linear combination of $\{ \varphi_S : S \in SF_>(\mu) \}$, or of $\{ \varphi_T : T \in SF_>(\rho) \}$, lies in $\Hcal_\mu \cap \Hcal_\rho$. First, assume that
\[
\sum_{S \in SF_>(\mu)} c_S \varphi_S \in \Hcal_\mu \cap \Hcal_\rho
\]
for some $c_S \in \QQ$. Then, there exist constants $d_T \in \QQ$ so that
\[
\sum_{S \in SF_>(\mu)} c_S \varphi_S = \sum_{T \in SF(\rho)} d_T \varphi_T.
\]
Pick any $\widetilde{S} \in SF_>(\mu)$ and write
\[
c_{\widetilde{S}} \varphi_{\widetilde{S}} = \sum_{T \in SF(\rho)} d_T \varphi_T - \sum_{S \in SF_>(\mu) - \{ \widetilde{S} \}} c_S \varphi_S.
\]
By Lemma \ref{lem-1}, and using the fact that $\{ \varphi_S : S \in SF(\mu) \}$ and $\{ \varphi_T : T \in SF(\rho) \}$ are linearly independent sets of monomials, the monomial $m_{\widetilde{S}}(Y)$ defined in \eqref{eq:bad-term} must divide each term on the right-hand side which has a nonzero coefficient. (More precisely, none of the terms \emph{within} a given sum can cancel because they are linearly independent, and none of the terms \emph{between} the sums can cancel because $\varphi_T$ cannot contain the same monomial $m_S(Y)$ as $\varphi_S$ for any $S \in SF_>(\mu)$.)

But, again applying Lemma \ref{lem-1}, $m_{\widetilde{S}}(Y)$ cannot divide $\varphi_T$ for any $T \in SF(\rho)$, forcing $d_T = 0$ for all $T$. Thus the above equation reduces to
\[
c_{\widetilde{S}} \varphi_{\widetilde{S}} = - \sum_{S \in SF_>(\mu) - \{ \widetilde{S} \}} c_S \varphi_S,
\]
forcing $c_S = 0$ for all $S$, since $\{ \varphi_S : S \in SF_>(\mu) \}$ is linearly independent. Thus
\[
\sum_{S \in SF_>(\mu)} c_S \varphi_S \in \Hcal_\mu \cap \Hcal_\rho \Rightarrow c_S = 0 \text{ for all } S.
\]
An identical argument shows that
\[
\sum_{T \in SF_>(\rho)} d_T \varphi_T \in \Hcal_\mu \cap \Hcal_\rho \Rightarrow d_T = 0 \text{ for all } T
\]
by appealing to the second assertion in Lemma \ref{lem-1}. Thus no nonzero element of $\Hcal_\mu \cap \Hcal_\rho$ can be written as a linear combination of $\{ \varphi_S : S \in SF_>(\mu) \}$ or of $\{ \varphi_T : T \in SF_>(\rho) \}$, proving that $\dim(\Hcal_\mu \cap \Hcal_\rho) \le \frac{n!}{2}$.
\end{proof}

Combining Theorems \ref{l-terms} and \ref{at-most} proves Theorem \ref{nfact/2}.

\begin{remark}\label{rem:other-one}
Dually, one may define the Garsia-Haiman module as
\[
D_\mu = \text{span}\{  f(\tfrac{\partial}{\partial x_1}, \ldots, \tfrac{\partial}{\partial x_n}; \tfrac{\partial}{\partial y_1}, \ldots, \tfrac{\partial}{\partial y_n}) \Delta_\mu : f \in \QQ[X,Y] \},
\]
the span of all partial derivatives of all orders of $\Delta_\mu$; the map $\Hcal_\mu \to D_\mu$ given by $f(X,Y) \mapsto f(\frac{\partial}{\partial X}, \frac{\partial}{\partial Y})\Delta_\mu$ is an isomorphism of doubly graded $\Sym_n$-modules (see \cite{Hai99}, Proposition 3.4).

It is worth pointing out that the original $\frac{n!}{k}$ conjecture in \cite{BG} is stated in terms of $D_\mu$, rather than $\Hcal_\mu$. Our basis for $\Hcal_\mu \cap \Hcal_\rho$ does not immediately yield a basis for $D_\mu \cap D_\rho$ in the sense that
\[
\{ \varphi_S(\tfrac{\partial}{\partial X},\tfrac{\partial}{\partial Y}) \Delta_\mu : S \in SF_<(\mu) \} \neq \{ \varphi_T(\tfrac{\partial}{\partial X},\tfrac{\partial}{\partial Y}) \Delta_\rho : T \in SF_<(\rho) \}.
\]
In fact, experimental evidence suggests that these sets are disjoint.

As the above isomorphism is bidegree-complementing, the naive/optimistic conjecture would be that the complements $SF_>(\mu)$ and $SF_>(\rho)$ index a basis for $D_\mu \cap D_\rho$. However, it is not true in general that $\varphi_S(\frac{\partial}{\partial X},\frac{\partial}{\partial Y})\Delta_\mu \in D_\rho$ for arbitrary $S \in SF_>(\mu)$ (and vice-versa), either. Thus it appears to be nontrivial to construct an explicit basis for $D_\mu \cap D_\rho$ in the present paradigm.
\end{remark}

\bibliographystyle{plain}

\bibliography{macdonald}

\begin{thebibliography}{10}

\bibitem{ARR}
Ron~M. Adin, Jeffrey~B. Remmel, and Yuval Roichman.
\newblock The combinatorics of the {G}arsia-{H}aiman modules for hook shapes.
\newblock {\em Electron. J. Combin.}, 15(1):Research Paper 38, 42, 2008.

\bibitem{All}
E.~E. Allen.
\newblock Bitableaux bases for some {G}arsia-{H}aiman modules and other related
  modules.
\newblock {\em Electron. J. Combin.}, 9(1):Research Paper 36, 59, 2002.

\bibitem{AG}
Sami Assaf and Adriano Garsia.
\newblock A kicking basis for the two column {G}arsia-{H}aiman modules.
\newblock In {\em 21st {I}nternational {C}onference on {F}ormal {P}ower
  {S}eries and {A}lgebraic {C}ombinatorics ({FPSAC} 2009)}, pages 103--114.
  Assoc. Discrete Math. Theor. Comput. Sci., Nancy, 2009.

\bibitem{Aval}
Jean-Christophe Aval.
\newblock Monomial bases related to the {$n!$} conjecture.
\newblock {\em Discrete Math.}, 224(1-3):15--35, 2000.

\bibitem{BG}
F.~Bergeron and A.~M. Garsia.
\newblock Science fiction and {M}acdonald's polynomials.
\newblock In {\em Algebraic methods and {$q$}-special functions
  ({M}ontr\'{e}al, {QC}, 1996)}, volume~22, pages 1--52, 1999.

\bibitem{BH}
F.~Bergeron and S.~Hamel.
\newblock Intersection of modules related to {M}acdonald's polynomials.
\newblock {\em Discrete Math.}, 217(1-3):51--64, 2000.

\bibitem{Foa}
{Foata, Dominique}.
\newblock {On the {N}etto inversion number of a sequence}.
\newblock {\em {Proc. Amer. Math. Soc.}}, {19}:{236--240}, {1968}.

\bibitem{GH}
A.M. Garsia and M.~Haiman.
\newblock Orbit harmonics and graded representations.

\bibitem{GH93}
A.M. Garsia and M.~Haiman.
\newblock A graded representation model for {M}acdonald's polynomials.
\newblock {\em Proc. Nat. Acad. Sci. U.S.A.}, 90(8):3607--3610, 1993.

\bibitem{GH96}
A.M. Garsia and M.~Haiman.
\newblock Some natural bigraded {$S_n$}-modules and {$q,t$}-{K}ostka
  coefficients.
\newblock {\em Electron. J. Combin.}, 3(2):Research Paper 24, approx. 60, 1996.

\bibitem{Hag04}
J.~Haglund.
\newblock A combinatorial model for the {M}acdonald polynomials.
\newblock {\em Proc. Natl. Acad. Sci. USA}, 101(46):16127--16131, 2004.

\bibitem{HHL}
J.~Haglund, M.~Haiman, and N.~Loehr.
\newblock A combinatorial formula for {M}acdonald polynomials.
\newblock {\em J. Amer. Math. Soc.}, 18(3):735--761, 2005.

\bibitem{Hai99}
M.~Haiman.
\newblock Macdonald polynomials and geometry.
\newblock {\em {New perspectives in algebraic combinatorics ({B}erkeley, {CA},
  1996--97)}}, {38}({207--254}), {1999}.

\bibitem{Hai01}
M.~Haiman.
\newblock Hilbert schemes, polygraphs, and the {M}acdonald positivity
  conjecture.
\newblock {\em J. Amer. Math. Soc.}, 14(4):941--1006, 2001.

\bibitem{Mac88}
I.G. Macdonald.
\newblock A new class of symmetric functions.
\newblock {\em Actes du 20$\,^{\circ}$ S{\'e}minaire Lotharingien}, pages
  131--171, 1988.

\bibitem{Stem}
J.~R. Stembridge.
\newblock Some particular entries of the two-parameter {K}ostka matrix.
\newblock {\em Proc. Amer. Math. Soc.}, 121(2):367--373, 1994.

\end{thebibliography}

\end{document}